\newcommand*{\ceilfrac}[2]{\mathopen{}\left\lceil\frac{#1}{#2}\right\rceil\mathclose{}}
\newcommand*{\abs}[1]{\lvert #1\rvert}
\newtheorem{defi}{Definition}
\newtheorem{conj}[defi]{Conjecture}
\newtheorem{thr}[defi]{Theorem}
\newtheorem{obs}[defi]{Observation}
\newtheorem{prop}[defi]{Proposition}
\newtheorem{q}[defi]{Question}
\newcommand*{\myproofname}{Proof}
\newenvironment{claimproof}[1][\myproofname]{\begin{proof}[#1]}{\end{proof}}
\def\C{\mathcal{C}}
\DeclareMathOperator{\VC}{VC}
\DeclareMathOperator{\out}{out}
\DeclareMathOperator{\col}{col}
\DeclareMathOperator{\Clique}{Clique}
\DeclareMathOperator{\Star}{Star}
\title{On edge-colouring-games by Erd\H{o}s, and Bensmail and Mc Inerney}
\author{Stijn Cambie 
 \thanks{Department of Computer Science, KU Leuven Campus Kulak-Kortrijk, 8500 Kortrijk, Belgium. Emails: \protect\href{mailto:stijn.cambie@hotmail.com}{\protect\nolinkurl{stijn.cambie@hotmail.com}} and 
\protect\href{mailto:michiel.provoost@kuleuven.be}{\protect\nolinkurl{michiel.provoost@kuleuven.be}}
}
\thanks{Supported by a postdoctoral fellowship by the Research Foundation Flanders (FWO), grant number 1225224N. }
\and Michiel Provoost\footnotemark[1]}
\begin{document}
\parindent=0cm
\maketitle

\begin{abstract}

We study two games proposed by Erd\H{o}s, and one game by Bensmail and Mc Inerney, all sharing a common setup: two players alternately colour edges of a complete graph, or in the biased version, they colour $p$ and $q$ edges respectively on their turns, aiming to maximise a graph parameter determined by their respective induced subgraphs.
In the unbiased case, we give a first reduction towards confirming the conjecture of Bensmail and Mc Inerney, propose a conjecture for Erd\H{o}s' game on maximum degree, and extend the clique and maximum-degree versions to edge-transitive and regular graphs.
In the biased case, the maximum-degree and vertex-capturing games are resolved, and we prove the clique game with $(p,q)=(1,3)$.


\end{abstract}

\section{Introduction}\label{sec:intro}


We consider three games: two proposed by Erdős~\cite{AMM83} (popularised last year at BCC by Cameron~\cite{BCC30} and as a problem in the Erd\H{o}s' problem database, \url{https://www.erdosproblems.com/778}), and one by Bensmail and Mc Inerney~\cite{BM22}, all with a similar setup. We first describe the shared principles of these games.

Consider a graph $G$ of order $n$ and size $m.$
Alice and Bob play a game, taking turns to colour the edges of $G$ red (Alice) and blue (Bob), where Alice is the starting player (player $1$), and Bob is the second player (player $2$).
When all edges are coloured, a score, which depends on the set of red edges $A$ and the set of blue edges $B$, is assigned. Here Alice has score $a$ and Bob $b$, where $a$ and $b$ depend on the final colouring of $G$, i.e., on $G[A]$ and $G[B]$ (the graphs induced by the edges in their colour respectively).
In the studied games, $a$ and $b$ will be bounded by the order of the graph, $n$.
Both players play optimally, ensuring that their score is as high as possible relative to the other’s score.
The score of the game, $s(G)$, equals the difference $a-b$.
Alice tries to maximise $s(G)$, while Bob tries to minimise $s(G)$. 
To ensure a unique outcome $(a,b)$, we assume Alice tries to maximise $a$ for the optimal value of $s=a-b$, and Bob tries to minimise it for the optimal value of $s=a-b$.
Equivalently, considering the outcome-function $f(G)=(C+1)a-Cb=C(a-b)+a$ for some $C \ge n$, Alice wants $f(G)$ to be as large as possible and Bob wants $f(G)$ to be as small as possible.

By Zermelo's theorem~\cite{Zermelo}, we know $f(G)$ and $s(G)$ are uniquely determined when both players play optimally. So, there is a unique optimal outcome $(a,b).$

The three games we will discuss are called the Clique game, the Star game and the vertex-capturing game.
When played on a graph $G$, the games are denoted by $\Clique(G), \Star(G)$ or $\VC(G)$ respectively, and when $G=K_n$, we write $\Clique(n), \Star(n)$ or $\VC(n).$

\begin{itemize}
  \item In the Clique game, $a = \omega(G[A])$ and $b = \omega(G[B])$.
  \item In the Star game, $a = \Delta(G[A])$ and $b = \Delta(G[B])$. 
  \item In the vertex-capturing game, at the end of the game; $a$ is the number of vertices for which most incident edges are red, and $b$ is the number of vertices for which most incident edges are blue ($a+b<n$ is possible if there are vertices with half of their incident edges in either colour).
\end{itemize}

The outcome $(a,b)$ and score $a-b$ of the games will be denoted by respectively $\out_{\omega},\out_{\Delta},\out_{\VC}$ and $s_{\omega}, s_{\Delta},s_{\VC}$.

By a strategy-stealing argument, one can prove that the first player will never perform worse than the second player.
For details, see the proof of~\cite[Thm.~2.1 \&~2.2]{BM22}, which applies to all three games.
\begin{prop}
  For every graph $G$, $s_{\omega}(G), s_{\Delta}(G) \in \{0,1\}$ and $s_{\VC}(G) \in \{0,1,2\}$.
\end{prop}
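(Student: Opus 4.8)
The plan is to prove both the lower bound $s\ge 0$ and the upper bounds by strategy stealing, with the single underlying engine being a monotonicity observation: in all three games the parameter a player scores (its clique number, its maximum degree, or its number of majority vertices) can only weakly increase when an extra edge of its own colour is added, since $\omega$ and $\Delta$ are monotone under edge addition, and an extra red edge at $uv$ can only push $u$ and $v$ towards a red majority. I would first record this as an observation, and also invoke Zermelo's theorem (as in the excerpt) to speak of the guaranteed value each player can secure.

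For the lower bounds $s_\omega,s_\Delta,s_{\VC}\ge 0$, I would run the classical strategy-stealing argument. The three games are symmetric in their rules: both players colour edges and each wants to maximise the same monotone functional of its own colour class. Suppose, for contradiction, that the second player could guarantee a strictly larger parameter than the first. Then Alice colours an arbitrary edge red and thereafter pretends to be the second player, following the hypothetical superior second-player strategy; whenever that strategy prescribes an edge already coloured red, she instead wastes a move on any remaining edge. At the end her red graph contains the red graph that the stolen strategy would have produced, so by monotonicity her score is at least as high, contradicting the assumed advantage of the second player. Hence the first player does at least as well, i.e.\ $s\ge 0$.

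For the upper bounds I would use a delayed version of the same idea. After Alice colours her first edge $e_1$, the remainder is exactly the game played on $G-e_1$ with Bob moving first. Applying the lower bound to $G-e_1$, Bob can secure a value $b^\circ$ for his own parameter that is at least the value $a^\circ$ that Alice can secure there, both computed while ignoring $e_1$. Reinstating $e_1$ as a red edge does not touch Bob's blue graph, so for the clique and star games $b=b^\circ$ while $a\le a^\circ+1$ (adding one edge raises $\omega$ or $\Delta$ by at most $1$); therefore $s_\omega,s_\Delta\le 1$. For the vertex-capturing game I would show that reinstating the red edge $e_1=uv$ increases $a-b$ by at most $1$ at each of $u$ and $v$: at a single vertex the extra red edge can either turn a tie into a red majority (gaining one unit for $a$) or turn a blue majority into a tie (losing one unit for $b$), but never both, so $a-b\le (a^\circ-b^\circ)+2\le 2$.

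I expect the main obstacle to be the case analysis for the vertex-capturing game: one must verify carefully that a single red edge shifts the signed count $a-b$ by at most one unit at each endpoint and affects no vertex other than $u$ and $v$, which is precisely what separates the bound $s_{\VC}\le 2$ from the bound $s_\omega,s_\Delta\le 1$. The remaining delicate point, common to all three games, is the rigorous treatment of the strategy-stealing waste step --- ensuring an unused edge always exists (a parity check) and that the imagined and real boards stay consistent --- which is routine but must be stated precisely.
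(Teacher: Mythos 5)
Your proposal is correct and follows essentially the same route as the paper, which does not write out a proof but simply invokes the strategy-stealing argument of Bensmail and Mc~Inerney (Theorems~2.1 and~2.2 of their paper): a direct steal for the lower bound $s\ge 0$, and for the upper bound the observation that after Alice's first edge the rest is the game on $G-e_1$ with Bob as first player, with the reinstated edge shifting $\omega$ or $\Delta$ by at most $1$ and the signed vertex-capturing count by at most $1$ at each endpoint. The delicate points you flag (completing the imaginary game after wasted moves, and the per-vertex case analysis for $\VC$) are exactly the details the cited reference handles.
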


For the Clique- and Star-game, we say a graph $G$ is a player 1 win, if $s_{\omega}(G), s_{\Delta}(G) =1$ (the maximum score possible for player $1$), and otherwise it is a player $2$ win.

Biased (asymmetric) variants of a colouring game were defined by~\cite{Kierstead05}, who provided motivation for studying such asymmetric versions.
A problem in certain applications can be translated into an asymmetric game (the opponent represents the worst case behaviour).

In the biased setting of the earlier defined games, the first player (Alice) colours $p$ edges at her turn and the second player (Bob) colours $q$ edges at his turn. Here we assume $q>p$ and Bob wins if at the end $b>a$ for the respective parameters of the game, or $q=p$ and Bob wins if $a=b.$

To specify the parameters $p$ and $q$, we write $\Clique_{(p,q)}(G)$ or $\Clique_{(p,q)}(n)$ and use analogous notation for the other two games.

Erd\H{o}s conjectured that for $\Clique(n)$, the second player can always force a win as long as $n \not =2,$ and asked the same question for $\Star(n)$.

\begin{conj}\label{conj:EP778}[\cite{AMM83}]
$\Clique(n)$ is a player $2$ win, $s_{\omega}(n)=0$, for all $n \in \mathbb N \setminus \{2\}.$
\end{conj}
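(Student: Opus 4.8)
The Proposition already gives $s_{\omega}(n)\in\{0,1\}$ by strategy stealing, which forces $a\ge b$ and $a-b\le 1$. The entire content of the conjecture is therefore to rule out $s_{\omega}(n)=1$, i.e.\ to exhibit for every $n\neq 2$ a strategy for the second player (Bob) that forces $\omega(G[B])\ge\omega(G[A])$; combined with $a\ge b$ this yields $a=b$. The plan is thus to design an explicit response strategy for Bob and to verify it never ends one clique-size short.

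The first idea I would pursue is a pairing (mirroring) strategy. When $\binom{n}{2}$ is even (that is, $n\equiv 0,1\pmod 4$), fix a perfect matching $M^{\ast}$ of the edge set $E(K_n)$ into pairs $\{e,e'\}$ and let Bob always answer Alice's move on $e$ by colouring its partner $e'$. A trivial induction shows the partner is always still available, so the red graph ends up a transversal $R$ of $M^{\ast}$ and the blue graph its complement $E(K_n)\setminus R$. The point to prove is then purely structural: that $M^{\ast}$ can be chosen so that \emph{every} transversal $R$ satisfies $\omega(E(K_n)\setminus R)\ge\omega(R)$. The natural candidate is to take $M^{\ast}$ induced by a vertex permutation, since then the red and blue graphs would be isomorphic and their clique numbers equal.

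The main obstacle is exactly here. An automorphism of $K_n$ is a vertex permutation, and its action on edges is a fixed-point-free involution only if the permutation is itself a vertex involution; but any transposition $(u\,v)$ fixes the edge $\{u,v\}$, and two fixed points $u,v$ fix $\{u,v\}$ as well, so no nontrivial automorphism-pairing is fixed-point-free. Hence a single symmetric pairing cannot work, and one is forced either to treat a ``defect'' set of edges separately or to let Bob play adaptively. I would therefore fall back on an adaptive shadowing strategy: Bob maintains a partial injection from the vertices of Alice's current largest red clique to a disjoint set of his own, and whenever Alice plays an edge that threatens to extend her record red clique, Bob plays the corresponding shadow edge to keep a blue clique of equal size alive. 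The small cases $n\le N_0$ would be settled by a finite game-tree search to serve as base cases.

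The crux, and the reason the conjecture remains open, is the tempo problem inherent in any shadowing idea: Alice may grow several candidate cliques at once, or reuse vertices across overlapping cliques, while Bob can answer only one edge per turn. Proving that Bob can always allocate his single response so as to match Alice's eventual maximum, without ever being pushed irrecoverably one step behind, is the heart of the matter. I expect the decisive step to be a potential/weight argument in the spirit of Erd\H{o}s--Selfridge, assigning to each vertex set a weight measuring how close it is to becoming a red clique strictly larger than anything blue can still reach, and showing Bob can keep the total such weight below the threshold at which a unilateral red lead of $1$ would be forced.
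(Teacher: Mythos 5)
The statement you were given is not a theorem of the paper: it is Erd\H{o}s's conjecture, stated as \cref{conj:EP778} and left open. The paper supplies only partial evidence --- a computer verification that Bob wins $\Clique(n)$ for $3 \le n \le 8$, and a strategy-stealing reduction showing that a first-player win at $\Clique(n)$ would force a second-player win at $\Clique(n+3)$ --- and explicitly lists the full conjecture among the problems on which ``little progress has been made.'' So there is no paper proof to compare against, and your proposal, by your own admission, does not close the conjecture either; it is a survey of approaches whose decisive step is deferred.

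The concrete gaps are these. Your pairing idea fails for the reason you identify (no automorphism of $K_n$ acts on $E(K_n)$ as a fixed-point-free involution), and it fails more basically for $n \equiv 2,3 \pmod 4$, where $\binom{n}{2}$ is odd, Alice colours one more edge than Bob, and no perfect pairing of $E(K_n)$ exists at all; moreover the paper reports that already for $n \in \{4,5\}$ a computer check rules out any second-player strategy forcing isomorphic red and blue graphs, so even an adaptive isomorphism-chasing variant is dead. Your fallback --- an adaptive shadowing strategy certified by an Erd\H{o}s--Selfridge-type potential --- is named but not constructed: Erd\H{o}s--Selfridge weights bound what a single Maker can achieve against a pure Breaker, whereas here Bob must simultaneously block Alice and build his own clique with the same edges, and no weight function with the required properties is exhibited. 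The ``tempo problem'' you describe in your final paragraph is precisely the open difficulty, so what you present is a statement of the problem rather than a proof of it.
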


\begin{q}[\cite{AMM83}]\label{q}
  For every $n \in \mathbb N,$ which player wins the game $\Star(n)$ (when is $s_{\Delta}(n)=1$)?
\end{q}

Bensmail and Mc Inerney~\cite[Conj.~7.2]{BM22} conjectured the exact outcome for the vertex-capturing game (on cliques), and confirmed it for $n \le 6$, in~\cite[Prop.~7.6]{BM22}.
\begin{conj}[\cite{BM22}]\label{conj:BM72}
  $$s_{\VC}(n)=\begin{cases}
    2 \mbox{ if } n=2\\
    1 \mbox{ if } n\equiv 3 \pmod 4\\
    0 \mbox{ otherwise. } \\
  \end{cases}$$
\end{conj}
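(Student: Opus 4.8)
The plan is to split on the residue of $n \bmod 4$ and, in each class, establish a matching lower and upper bound on $s_{\VC}(n)$. I would start with the bookkeeping that already fixes the global shape of the answer. For a final colouring write $r_v$ and $\beta_v$ for the red and blue degrees of a vertex $v$ and set $x_v = r_v - \beta_v$, so that $v$ is counted in $a$, in $b$, or in neither exactly when $x_v>0$, $x_v<0$, or $x_v=0$. Since $r_v+\beta_v=n-1$ we have $x_v \equiv n-1 \pmod 2$, so ties ($x_v=0$) can occur precisely for odd $n$; for even $n$ this already forces $a+b=n$. Summing, $\sum_v x_v = 2(\abs{A}-\abs{B})$, and $\abs{A}-\abs{B}$ equals $1$ when $m=\binom{n}{2}$ is odd and $0$ otherwise, with $m$ odd exactly for $n\equiv 2,3 \pmod 4$; hence $\sum_v x_v\in\{0,2\}$, taking the value $2$ in precisely those two classes. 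Combined with the strategy-stealing Proposition above (which gives $s_{\VC}\ge 0$ unconditionally), the lower bound $s_{\VC}(n)\ge 0$ is then free for $n\not\equiv 3\pmod 4$, and it remains to produce Bob's strategies capping $s_{\VC}$ at the claimed value, and Alice's strategy forcing $s_{\VC}\ge 1$ when $n\equiv 3\pmod 4$.

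For Bob's upper bound I would use a mirroring strategy built from a fixed-point-free involution $\pi$ of the vertex set, i.e.\ a perfect-matching pairing $\{v,\pi(v)\}$ when $n$ is even (with one leftover vertex when $n$ is odd, treated separately). Bob answers each move of Alice on an edge $e$ by colouring the mirrored edge $\pi(e)=\{\pi(x),\pi(y)\}$ with the opposite colour, the only exceptional edges being the $n/2$ matching edges, which satisfy $\pi(e)=e$. The colour flip is chosen so that, for a matched pair $\{v,v'\}$, the red degree of $v$ among the non-matching edges (call it $R_v$) equals the blue degree of $v'$ among those edges. A short computation then shows that, whatever the colour of $vv'$, the pair contributes exactly one red-majority and one blue-majority vertex \emph{unless} $v,v'$ are perfectly balanced off $vv'$ (the boundary case $R_v=(n-2)/2$), in which case both take the colour of $vv'$. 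Thus Bob secures perfect balance on every non-boundary pair, and the whole question collapses to the boundary pairs and the colours on the matching edges.

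The crux, and the step I expect to be the main obstacle, is controlling these boundary vertices together with the colours that Alice and Bob are forced onto the matching edges; this is exactly where the dependence on $n\bmod 4$ is generated. One must track, by a parity count, how many matching edges finish red versus blue and how many pairs land in the boundary case, and then show Bob can arrange the boundary pairs to split evenly for $n\equiv 0,1,2\pmod 4$ while conceding at most one surplus red-majority vertex for $n\equiv 3\pmod 4$. Making this rigorous rather than merely generic is the heart of the difficulty: Alice can deliberately steer play toward boundary configurations, and when she plays a matching edge Bob loses the clean turn-pairing and must spend a free move. This is presumably the point the "first reduction" only partially bridges, and I would attack it by augmenting the pairing with an explicit response rule on the matching edges and an amortised potential measuring the current red–blue imbalance.

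Finally, for the lower bound in the single winning class $n\equiv 3\pmod 4$ I would give Alice a complementary strategy: she mirrors Bob's balancing while spending her one extra move (available since $m$ is odd here) to convert a single tied or boundary vertex into a red-majority one, which together with $\sum_v x_v=2$ forces $a\ge b+1$. To pass from small to all $n$ I would set up an induction in steps of $4$, embedding an optimal strategy on $K_n$ into $K_{n+4}$ by decomposing the new edges into Bob-controllable mirrored blocks, reducing the claim to the base cases $n\le 7$ (the range $n\le 6$ being already verified in~\cite{BM22}). The genuinely hard points are the boundary-control lemma for Bob and the correctness of the inductive embedding; everything else is the parity bookkeeping above.
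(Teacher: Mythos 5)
This statement is a \emph{conjecture} of Bensmail and Mc Inerney; the paper does not prove it and explicitly leaves it open. The only things established here are the computer verification for small $n$ and \cref{prop:n1mod4}, a strategy-stealing reduction showing that the case $n\equiv 1\pmod 4$ follows from the case $n\equiv 3\pmod 4$. So there is no ``paper proof'' to match your argument against, and your proposal should be judged as an attempted resolution of an open problem.

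As such, it has genuine gaps, and you partly acknowledge them yourself. The parity bookkeeping in your first paragraph is fine, but the core of the argument --- the colour-flipped mirroring via a fixed-point-free involution $\pi$ --- does not close. First, Bob's pairing collapses whenever Alice plays a matching edge $v\pi(v)$: Bob has no mirrored response, must spend a ``free'' move that destroys the $\pi$-symmetry of the coloured edge set, and Alice can then play the images of Bob's free moves to keep him permanently out of sync. Second, even when the pairing survives, every boundary pair (both vertices balanced off the matching edge) contributes \emph{two} vertices to whichever player owns the matching edge, and you give no mechanism by which Bob controls who colours these edges or how many boundary pairs arise; your ``boundary-control lemma'' is named but not proved, and it is precisely the open content of the conjecture, not a technical detail. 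The paper itself flags both warnings: mirror-type strategies already fail for $n\in\{4,5\}$, and $s_{\VC}(\C(12))\neq 0$ despite $\C(12)$ having even order and even size, so no purely parity-based argument can work. Finally, the proposed induction from $n$ to $n+4$ via an ``inductive embedding'' is asserted without any construction; note that the reduction the paper does prove goes from $n-2$ to $n$ (odd classes only) and requires a careful pairing of the edges meeting the two new vertices plus a separate strategy-stealing step to rule out Alice losing. In short, your plan reproduces the known easy parts and defers exactly the steps that make the conjecture hard.
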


We prove one implication between the validity of~\cref{conj:BM72} for two residue classes modulo $4$.\footnote{This proof by strategy stealing is similar to reductions in~\cite{MS24} and was obtained before their publication, as communicated with them.}

\begin{prop}\label{prop:n1mod4}
  If~\cref{conj:BM72} is true for every $n\equiv 3 \pmod 4,$ then it is true for every $n\equiv 1 \pmod 4$ as well.
\end{prop}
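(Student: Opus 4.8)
The plan is to combine the unconditional lower bound with a strategy‑stealing comparison between $\VC(n)$ and $\VC(n+2)$. Recall that $s_{\VC}\in\{0,1,2\}$ and that, by the strategy‑stealing argument quoted before the proposition, the first player never fares worse than the second, so $s_{\VC}(n)\ge 0$ for every $n$. Hence, to obtain $s_{\VC}(n)=0$ for $n\equiv 1\pmod 4$ it suffices to prove the matching upper bound $s_{\VC}(n)\le 0$, i.e.\ that Bob can force $a\le b$ on $K_n$. Since $n+2\equiv 3\pmod 4$, the hypothesis supplies $s_{\VC}(n+2)=1$, of which I will use only the half we need: on $K_{n+2}$ the second player can hold $a-b\le 1$. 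The whole argument hinges on transferring this bound down to $K_n$ while making the one permitted unit of surplus ``disappear'' into two auxiliary vertices.

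Concretely, I view $K_{n+2}$ as $K_n$ together with two extra vertices $x,y$; the new edges are $xy$ together with the $2n$ edges $\{ix,iy:i\in V(K_n)\}$, i.e.\ $2n+1$ new edges in total. Bob, playing on the real board $K_n$, maintains in parallel a virtual game on $K_{n+2}$ in which he again plays second and follows an optimal second‑player strategy $\tau$ guaranteeing virtual score $a-b\le 1$. Alice's real (inner) moves are fed to the virtual game verbatim; whenever $\tau$ answers on an inner edge Bob plays that edge for real, and whenever $\tau$ answers on a new edge Bob records it only virtually and makes a reconciling inner move for real. The two levers Bob pulls are (i) a pairing coupling the two new edges $ix,iy$ at each inner vertex $i$, arranged so that this pair always ends up one red and one blue, and (ii) control of the colours at $x$ and $y$, arranged so that the permitted surplus is realised at $\{x,y\}$ (for instance $x$ red‑majority, $y$ balanced) rather than inside $K_n$.

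The point of (i) is status preservation: once each pair $\{ix,iy\}$ is split, every inner vertex acquires exactly one extra red and one extra blue incident edge, and since its virtual degree $n+1$ exceeds its real degree $n-1$ by exactly two, its colour‑majority status in the virtual game coincides with its status in the real game. Writing the virtual score as $(a_{\mathrm{in}}-b_{\mathrm{in}})+(a_{xy}-b_{xy})\le 1$, the localisation in (ii) forces $a_{xy}-b_{xy}\ge 1$, and hence leaves the real inner score $a_{\mathrm{in}}-b_{\mathrm{in}}\le 0$, which is exactly what we want. This is also where the residue hypothesis enters: $n\equiv 1\pmod 4$ is precisely the condition $\binom n2\equiv 0\pmod 2$, so the parity of the $2n+1$ new edges is compatible with Bob answering every one of Alice's moves and spending exactly one tempo to fix the majorities at $x$ and $y$.

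The main obstacle is the faithful reconciliation of the two boards. Alice's real strategy only ever specifies inner moves, so the colours of the new edges are not pinned down by her play; Bob must supply them---both the moves $\tau$ assigns to the virtual second player and the ``phantom'' moves he attributes to the virtual first player---in a way that is simultaneously (a) consistent with a legal virtual game to which $\tau$'s guarantee applies, (b) keeps every pair $\{ix,iy\}$ split, and (c) forces the net contribution of $\{x,y\}$ to the virtual score to be at least $+1$ in Alice's favour (a parity count shows $+1$ is the most one can localise there, so this value is attained, not exceeded). Making the turn‑order bookkeeping line up---so that the reconciling inner moves never run out and the unit of surplus is provably confined to $x$ and $y$ no matter how Alice plays---is the delicate step, and it is precisely the part modelled on the reductions of~\cite{MS24}.
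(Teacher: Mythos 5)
Your reduction runs in the opposite direction from the paper's: you try to embed the real game on $K_n$ into a larger virtual game on $K_{n+2}$ (with $n+2\equiv 3\pmod 4$) in which Bob plays second, whereas the paper has Bob answer Alice's first edge $uv$ by playing \emph{first}, and optimally, on the $K_{n-2}$ induced by the remaining vertices (with $n-2\equiv 3\pmod 4$), while pairing each edge $ux$ with $vx$. The downward reduction works because every simulated move is a genuine move of the real board and the extra edges are handled by an explicit pairing: the hypothesis $s_{\VC}(n-2)=1$ hands Bob a surplus of one captured vertex inside $K_{n-2}$ (the split pairs preserve each inner vertex's majority status), a pigeonhole count on the paired edges shows Alice captures at most one of $\{u,v\}$, so $a\le b$, and strategy stealing upgrades ``Bob does not lose'' to $s_{\VC}(n)=0$.

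Your upward reduction has a genuine gap at exactly the point you flag as ``the delicate step'', and I do not think it can be repaired in the form stated. Bob is supposed to follow an optimal second-player strategy $\tau$ on $K_{n+2}$, but $\tau$ dictates the virtual second player's replies; Bob cannot simultaneously obey $\tau$ and enforce your constraint (i) that every pair $\{ix,iy\}$ ends up split, since $\tau$ may colour both $ix$ and $iy$ blue (or neither), and deviating from $\tau$ forfeits the only guarantee you have. Requirement (c) is worse: $\tau$'s guarantee concerns only the total score $a-b\le 1$ and says nothing about \emph{where} a surplus vertex sits, so nothing prevents the virtual game from ending with Alice's one-vertex surplus at an inner vertex; your parenthetical ``a parity count shows $+1$ is the most one can localise there, so this value is attained'' is a non sequitur, as an upper bound on what can be localised at $\{x,y\}$ does not force the surplus to land there. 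Finally, the phantom first-player moves on the $2n+1$ new edges must be interleaved with real Alice's inner moves into a legal virtual game while Bob also finds a real inner reply for every reply of $\tau$ that lands on a new edge; this bookkeeping is never carried out, and it is where the actual work would lie. The fix is to run the simulation on the smaller clique, as the paper does.
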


This is done in~\cref{sec:strategystealing}, where, for completeness, we also extend an observation by~\cite{MS24} using strategy stealing, confirming that $\Clique(n)$ being a win for the first player implies that $\Clique(n+3)$ is a win for the second player for every $n$.

Finally, we give our intuition, formulated in conjectures for the Clique and Star game (stated in the framework of edge-transitive and regular graphs). In particular, we are the first to conjecture the answer for~\cref{q}.

\begin{conj}
  With the exception of $K_2$, for every edge-transitive graph $G$, the Clique game is winning for the second player, i.e., $s_{\omega}(G)=0.$
\end{conj}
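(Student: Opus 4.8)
The natural tool is a \emph{mirroring} strategy for the second player built from a graph automorphism. Suppose $G$ admits an automorphism $\sigma$ whose induced action on $E(G)$ is a fixed-point-free involution, i.e. $\sigma(e)\neq e$ and $\sigma^2(e)=e$ for every edge $e$. The plan is to have Bob answer each red edge $e$ chosen by Alice with the blue edge $\sigma(e)$. One checks by induction that Bob can always do this: if the current red set is $A$ and the current blue set is $\sigma(A)$, and Alice now colours an uncoloured edge $e$, then $\sigma(e)$ is neither red (else $e\in\sigma(A)$) nor blue (else $e\in A$) nor equal to $e$ (fixed-point-freeness), so $\sigma(e)$ is available. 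A fixed-point-free involution has all orbits of size $2$, so $|E(G)|$ is even and the game ends with $B=\sigma(A)$; hence $G[B]=\sigma(G[A])\cong G[A]$ and in particular $\omega(G[B])=\omega(G[A])$. This gives $s_\omega(G)=0$, so Bob wins. I would isolate this as a lemma: \emph{if $G$ has an automorphism inducing a fixed-point-free involution on its edges, then $s_\omega(G)=0$.} Note that $K_2$ is automatically excluded, since its single edge forbids such an involution.

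The next step is to produce such an automorphism from edge-transitivity. I would use the structural dichotomy that a connected edge-transitive graph is either vertex-transitive or bipartite with its two vertex-orbits as the parts. For the bipartite case one looks for a suitable part-respecting or part-swapping involution; for vertex-transitive Cayley graphs $\mathrm{Cay}(\Gamma,S)$ one can try left translation by a central involution $z\notin S$, since $g\mapsto zg$ is then a fixed-point-free involutory automorphism whose swap-pairs $\{g,zg\}$ are non-edges, so that no edge is fixed. The aim of this step is to show that a large, explicitly described subfamily of edge-transitive graphs, in particular those with $|E(G)|$ even that carry an accessible involution, satisfies the hypothesis of the lemma and is therefore a second-player win.

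The hard part is twofold, and is where the conjecture earns its status. First, the required involution need not exist even when $|E(G)|$ is even: already for $K_4$ every involution in $\mathrm{Aut}(K_4)=S_4$ fixes two edges, so no fixed-point-free involutory edge-action exists and the clean mirror collapses. When $|E(G)|$ is odd (for instance $K_n$ with $n\equiv 2,3\pmod 4$, or the triangle $K_3$) such an involution is impossible for parity reasons, yet Bob still wins in these cases by a more delicate, adaptive strategy. Handling these graphs requires replacing the rigid automorphism-mirror by an argument that only guarantees $\omega(G[B])\ge\omega(G[A])$ rather than a global isomorphism, while controlling the at most one unpaired move. Second, and decisively, the conjecture contains \cref{conj:EP778} as the special case $G=K_n$, so a full unconditional proof would in particular resolve Erd\H{o}s' open clique conjecture. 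For this reason the realistic plan is to prove the statement for every edge-transitive graph admitting the involution above, to treat the small-symmetry exceptions such as $K_4$ by an adaptive near-mirror, and otherwise to reduce the remaining instances to the complete-graph conjecture rather than to attempt them directly.
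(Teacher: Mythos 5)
This statement is one of the paper's \emph{conjectures}: the paper offers no proof of it, and indeed explicitly presents it as an open intuition whose special case $G=K_n$ is Erd\H{o}s' unresolved \cref{conj:EP778}. So there is no proof in the paper to compare yours against, and your proposal, read as a proof attempt, has a genuine gap that you yourself correctly diagnose: it establishes the conclusion only for a proper subfamily of edge-transitive graphs and otherwise reduces to, or defers to, the open complete-graph case. To be explicit about why the reduction cannot be completed along the proposed lines: your mirroring lemma is sound (if some $\sigma\in\mathrm{Aut}(G)$ induces a fixed-point-free involution on $E(G)$, then Bob's reply $e\mapsto\sigma(e)$ is always legal and yields $G[B]\cong G[A]$, hence $s_\omega(G)=0$), but it is vacuous precisely where the conjecture has its content. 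For any graph in which only the identity automorphism fixes every edge -- in particular every $K_n$ with $n\ge 3$ -- an automorphism inducing an involution on edges must itself be an involution on vertices, and every non-identity involution of $K_n$ is a product of $k\ge 1$ disjoint transpositions, each of which fixes the edge joining its two swapped vertices. Hence no complete graph admits the required involution, not just $K_4$, and the mirror strategy says nothing about \cref{conj:EP778}.

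What your argument does deliver is a correct and self-contained sufficient condition ($s_\omega(G)=0$ whenever $G$ carries a fixed-point-free edge involution in $\mathrm{Aut}(G)$), applicable for instance to even cycles and various bipartite edge-transitive graphs; that would be a legitimate lemma supporting the conjecture, and it is consistent with the paper's own remark that already for $n\in\{4,5\}$ there is no second-player strategy forcing isomorphic coloured graphs on $K_n$, i.e.\ that purely isomorphism-based mirroring cannot be the whole story. But the adaptive ``near-mirror'' you invoke for graphs without such an involution is only named, not constructed, and the final step of your plan is an explicit appeal to the unproven complete-graph conjecture. The proposal is therefore an honest partial-progress sketch rather than a proof, which is the appropriate status given that the statement is open.
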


\begin{conj}
  With the exception of $K_2$ and $K_3$, for every regular graph $G$, the Star game is winning for the second player, i.e., $s_{\Delta}(G)=0.$
\end{conj}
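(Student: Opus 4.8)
The plan is to first translate the Star game on a $d$-regular graph into a purely combinatorial condition on the final red-degree sequence. Write $r_v$ and $b_v$ for the red and blue degrees of a vertex $v$ at the end of the game. Since $G$ is $d$-regular and every edge is eventually coloured, $b_v = d - r_v$ for all $v$, so $a = \Delta(G[A]) = \max_v r_v$ while $b = \Delta(G[B]) = \max_v b_v = d - \min_v r_v$. Hence
\[
  s_\Delta(G) = a - b = \max_v r_v + \min_v r_v - d,
\]
and by the earlier strategy-stealing result this quantity lies in $\{0,1\}$. Thus the second player wins exactly when the final colouring satisfies $\max_v r_v + \min_v r_v \le d$; equivalently, Bob must guarantee that whenever some vertex becomes red-heavy (large $r_v$), some other vertex stays correspondingly blue-heavy (small $r_v$). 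In particular, if Bob can force a single vertex with $r_v = 0$, then $\min_v r_v = 0$ and $s_\Delta(G) \le \max_v r_v - d \le 0$, so $s_\Delta(G) = 0$ follows immediately.

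Second, I would design Bob's strategy around this dichotomy, combining two reactive tasks: \emph{capping} the global maximum red degree, and \emph{reserving} a blue-heavy vertex to offset it. The degree budget makes these tasks compatible in spirit: the red edges number $|A| = \lceil m/2\rceil = \lceil nd/4\rceil$, so the red degrees sum to about $nd/2$ and the average red degree is about $d/2$; a large maximum therefore forces low-red vertices to exist, and Bob's job is to make sure that he, not Alice, controls one of them. Concretely, I would have Bob maintain a candidate reservoir vertex $w$ at which he preferentially colours edges blue, while spending his remaining responses at the vertex currently carrying the largest red degree, so as to keep $\max_v r_v$ near $d/2$. The analysis splits by parity: when $d$ is even the target is the clean bound $\max_v r_v \le d/2$, which already yields $\max + \min \le d$, whereas when $d$ is odd the slack in $\max_v r_v + \min_v r_v \le d$ is only one, which is exactly the margin that makes $K_3$ a first-player win and signals where the argument is delicate.

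Third, I would establish the conjecture first where explicit strategies are available. For small fixed degree the red-degree sequence is so constrained that Bob can secure $\max_v r_v + \min_v r_v \le d$ directly: for a perfect matching ($d=1$) Bob simply leaves one edge blue, producing a red-free vertex, and $d=2$ and cubic graphs should be tractable by hand. For highly symmetric graphs, and vertex-transitive graphs in particular, one expects a clean mirroring or pairing response strategy in the spirit of the companion edge-transitive Clique conjecture, where the symmetry automatically pairs Bob's reservoir against Alice's best red vertex. Reducing the general regular case to connected graphs, and then to symmetric or small-degree building blocks, is a natural intermediate goal.

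The hard part will be the dense, odd-degree regime, and this is genuinely the main obstacle: the conjecture contains the Star game on $K_n$ for all $n \neq 2,3$, which is precisely the open Question~\ref{q}. There Alice can raise the red degree at many vertices at once, so a single Bob response per round cannot simultaneously cap the global maximum and shepherd a reservoir vertex to low red degree; moreover, since Alice spreads red among many vertices, Bob cannot hope for a red-free vertex for free, and the parity tie-break leaves essentially no slack, as Alice needs only $\max_v r_v + \min_v r_v = d+1$. Consequently I would regard a complete proof as out of reach by elementary means and would instead aim for the strongest unconditional partial statement — the even-degree, small-degree, and vertex-transitive cases — while isolating the complete-graph case as the key remaining barrier.
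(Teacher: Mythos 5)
This statement is a conjecture in the paper, not a theorem: the authors offer no proof, and explicitly present it as their intuition about the Star game on regular graphs. So there is nothing in the paper to compare your argument against, and your proposal is correctly self-diagnosed as a plan rather than a proof. Your preliminary reduction is right and worth recording: on a $d$-regular graph with all edges eventually coloured one has $b_v=d-r_v$, hence $s_\Delta(G)=\max_v r_v+\min_v r_v-d$, and by the strategy-stealing proposition it suffices for Bob to force $\max_v r_v+\min_v r_v\le d$. Your small cases check out ($d=1$ with at least two edges is an immediate tie; $C_4$ is a tie under any play), and you correctly observe that forcing a red-free vertex would settle the game, while also noting that Alice can prevent this on dense graphs.

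The genuine gap is that the entire core of the conjecture is missing, and you concede as much. Since $K_n$ is regular, the conjecture subsumes Question~\ref{q}, the open Erd\H{o}s problem of deciding $\Star(n)$, so any complete proof must in particular resolve that; your proposed strategy (cap the maximum red degree while reserving a blue-heavy vertex) is exactly where the difficulty concentrates, because with one response per move Bob cannot obviously do both, and in the odd-degree regime the inequality $\max_v r_v+\min_v r_v\le d$ has no slack --- this is precisely why $K_3$ is an exception. What you have is a correct reformulation of the winning condition for regular graphs plus a verification of degenerate cases; it does not constitute a proof of the conjecture, nor even of an interesting unconditional special case beyond what is immediate. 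If you want a publishable partial result along these lines, the vertex-transitive or even-degree cases you mention would need to be carried out in full, with an explicit pairing strategy and an analysis of the parity of $m$ (who colours the last edge), neither of which is present.
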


At the end of their conclusion, Malekshahian and Spiro~\cite{MS24} wondered whether studying $\Star_{(p,q)}(n)$ would be interesting. In~\cref{sec:biased_pq}, more precisely~\cref{thm:biased_D_vc}, we notice that both $\Star_{(p,q)}(n)$ and $\VC_{(p,q)}(n)$ are (up to some small cases) player $2$ wins whenever $q>p.$

In~\cite{MS24}, the authors noted that $\Clique_{(p,q)}(n)$ is a player $2$ win for an exponential bound of $q$ in $p$.
In~\cref{sec:biased_pq}, we improve this drastically to a (unconditional) quadratic bound and a (conditional) linear bound.
Erd\H{o}s~\cite{AMM83} also asked if $\Clique_{(1,2)}{(n)}$ is won by the second player whenever $n\ge 4.$ We conjecture this is indeed the case.
Our main result in this direction proves this when $q=2$ is replaced by $q=3$. (Notice that in~\cite{MS24}, it is proved with $q=15$)

\begin{thr}\label{thr:main}
    For every $n\ge 4$, $\Clique_{(1,3)}(n)$ is won by player $2.$
\end{thr}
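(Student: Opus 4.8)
The plan is to exhibit an explicit strategy for the second player (Bob) that forces $\omega(G[B]) > \omega(G[A])$ at the end of the game; since $q=3>p=1$, this is exactly what it means for Bob to win. The guiding idea is that building a clique is ``two-dimensional'': enlarging a clique of size $k$ by one vertex costs its owner $k$ edges, so a player who colours three edges per turn can extend a clique roughly three times faster than a player who colours one. Bob will therefore greedily maintain a single blue clique $Q$, extend it one vertex at a time, and use his surplus edges to keep a stock of \emph{reserve} vertices that are already blue-complete to $Q$, so that each extension can be carried out essentially for free and is robust to Alice's interference.

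The central invariant I would maintain is that, after each of Bob's turns, $|Q| \ge \omega(G[A])+1$ and there is at least one reserve vertex blue-adjacent to all of $Q$, ready to be promoted. The point is a trade-off Alice cannot escape: to raise her red clique from size $k$ to $k+1$ she must colour $k$ new red edges, which costs her about $k$ turns, and during that window Bob colours about $3k$ edges, enough both to promote a reserve (restoring $|Q|\ge\omega(G[A])+1$ the moment Alice catches up) and to rebuild the reserve stock. Conversely, if Alice spends her moves interfering with Bob's reserves instead of growing her own clique, then $\omega(G[A])$ stays small and Bob only needs a small blue clique, which his three-to-one edge advantage easily secures. Making this dichotomy quantitative, for instance via a potential function charging Alice's interference moves against her clique growth, is the technical core.

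The step I expect to be the main obstacle is the interference bookkeeping: each Alice move can ruin a reserve by colouring a single incident edge red, so I must show that Bob can always keep a ready-to-promote reserve available against an adversary who concentrates all her interference on his current reserves. This is precisely where the factor $3$ is used quantitatively; with $q=2$ the margin would be too thin, which is why $(1,3)$ is the first genuinely nontrivial case (and why \cite{MS24} needed a much larger $q$). Concretely, I would keep several reserves in parallel and track, for each, how many blue edges to $Q$ it still needs, arguing that Bob's three edges per turn let him advance this stock faster than Alice can deplete it.

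Finally, I would handle termination and the small cases. Since the invariant persists until every edge is coloured, it yields $\omega(G[B]) \ge \omega(G[A])+1$ at the end, as required. The asymptotic counting has little slack while the cliques are still small, so the base cases (small $n$, equivalently the first few clique sizes) should be checked directly: for $n=4$, for example, Bob simply seizes a blue triangle on his first turn, whereas Alice's two edges can only form a clique of size $2$, so he already wins. The invariant argument then takes over for the remaining values of $n$.
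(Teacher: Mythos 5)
Your proposal is a plan rather than a proof: the step you yourself flag as ``the technical core'' --- the interference bookkeeping that keeps a ready-to-promote reserve alive --- is exactly where the argument breaks down under honest accounting, and it is not carried out. Once $|Q|=k$, a reserve candidate needs $k$ blue edges to $Q$, but Alice can permanently veto any candidate with a \emph{single} red edge into $Q$. So the exchange rate in the reserve-building race is not $3$-to-$1$ in Bob's favour but roughly $3$-to-$k$ against him: if Bob concentrates his three edges on one candidate, Alice kills it on her next move before it reaches $k\ge 4$; if he spreads over $M$ candidates for $T$ turns, he needs $3T\ge kM$ while Alice leaves only $M-T>0$ survivors, forcing $k<3$. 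Escaping this requires a genuinely new idea (e.g.\ a box-game-type analysis), and your proposed dichotomy does not supply one, because it is false that Alice must choose between growing her clique and interfering: a single red edge $uv$ can simultaneously extend her clique and destroy a reserve candidate $u$, so you cannot charge interference moves against her clique growth. The invariant $|Q|\ge\omega(G[A])+1$ with a live reserve is therefore not maintainable as stated.

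The paper avoids this entire difficulty with a mirroring strategy rather than a greedy one. Bob pairs the vertices into opposite pairs $(v_i,v_{i+k})$; when Alice colours $v_iv_j$ he colours the other three edges of the $K_4$ on $\{v_i,v_j,v_{i+k},v_{j+k}\}$ (in particular $v_iv_{j+k}$, $v_{i+k}v_{j+k}$, $v_jv_{i+k}$). Any red clique $V_1=\{v_1,\dots,v_r\}$ then forces the mirror clique $V_2=\{v_{1+k},\dots,v_{r+k}\}$ to be entirely blue, and a counting argument (the bipartite graph between $V_1$ and $V_2$ contains at most $r-1$ red edges, since only edges Alice coloured while an endpoint was unlabelled contribute two red edges to such a $K_4$) yields a vertex $v_i\in V_1$ completely blue to $V_2$, giving Bob a blue $K_{r+1}$; a separate finishing phase handles the two (resp.\ three or five) leftover vertices depending on the parity of $n$. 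So Bob never has to win a race at all --- his clique is built for him, edge for edge, by Alice's own moves. Your $n=4$ base case is fine, but the general argument needs to be replaced, not merely quantified.
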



Despite the interest of many mathematicians, little progress has been made on fully resolving \cref{q} and conjectures~\ref{conj:EP778} and~\ref{conj:BM72}, or the unbiased $p=q$ versions. At the end of our work, we highlight the underlying difficulties that may explain why this is so hard and remains unresolved.

In~\cref{sec:part_beh}, we consider the three games for Colex graphs.
A Colex graph $\C(m)$, where $m=\binom{m_1}{2}+m_2$ with $ 0\le m_2< m_1$, is the graph formed by adding $m_2$ edges between $K_{m_1}$ and an additional vertex (if $m_2>0$). See~\cref{fig:C(12)} for examples where $(m_1,m_2) \in\{(4,3), (5,2)\}.$
As a class of graphs that smoothly extend cliques, this serves as a showcase to challenge certain intuitive expectations by exhibiting particular behaviours.
The results were found by computer code (which implements the idea of Zermelo's theorem~\cite{Zermelo}), explained in~\cref{sec:pc_code}.

\section{Further results by strategy stealing}\label{sec:strategystealing}

In this section, we present the proof of~\cref{prop:n1mod4}.

\begin{proof}[Proof of~\cref{prop:n1mod4}.]
  Let $n \equiv 1 \pmod 4$ where $n\ge 5$.
  Alice colours one edge, which we call $uv$, red.
  Bob plays on the other vertices, which induce a $K:=K_{n-2}$, optimally (being the first player there).
  Whenever Alice colours an edge $ux$ or $vx$, for $x \in V(K),$ Bob colours the other of the two in his colour.
  If Alice colours an edge in $K,$ so does Bob.
  Since $s_{\VC}(K)=1,$ and for every $x \in V(K),$ there are two edges outside $K$ with different colours, Bob captures one vertex more among those $n-2$ (by the assumption that ~\cref{conj:BM72} is true for $n-2\equiv 3 \pmod 4$).
  By the pigeon hole principle, one vertex of $\{u,v\}$ has at least $\ceilfrac{n-2}{2}=\frac{n-1}{2}$ incident edges coloured by Bob.
  Hence Alice will have captured exactly one vertex among $u$ and $v.$
  This implies that Bob does not lose.
  By a strategy stealing argument, Alice will not lose either (\cite[Thm.~2.2]{BM22}), and this implies that $s_{\VC}(K_n)=0.$
\end{proof}

As a corollary of~\cref{prop:n1mod4}, together with the computer verification for $n \le 8$, we know that~\cref{conj:BM72} is true for $n \le 9.$

When $4 \mid n,$ the truth of~\cref{conj:BM72} appears natural since we know that $s_{\VC}(G) \in \{0,2\}$, while the size and order are both even. Nevertheless, taking into account that $\VC( \C(12))\not=0$ (see~\cref{tab:Results_games_onColexGraphs}) while $\C(12)$ has even order and even size, a proof cannot be trivial.
Also one might think about mirror strategies creating isomorphic coloured graphs on even size cliques $K_n.$ Nevertheless, already for $n \in \{4,5\}$ it can be checked that there is no winning strategy for the second player for the creation of isomorphic coloured graphs.

In~\cite{MS24}, the authors proved that if Alice wins $\Clique(n)$ (something that conjecturally is only the case when $n=2$), then Bob wins $\Clique(n+3)$ for $n \ge R(5,5).$
This was later included on~\url{https://www.erdosproblems.com/778} (at that time) without the condition that $n$ has to be sufficiently large.
Here we prove that it is indeed a valid statement for all $n$. This follows in essence from~\cite[Rem.~8]{MS24} with the additional observation that $n \ge R(3,4)=9$ instead of $n\ge R(4,4)=18$ also works, and the computer check for $n \le 8$ (checked with independent implementation for $n \le 7$).

\begin{prop}
  If Alice wins $\Clique(n)$ for $n \ge 8$, then Bob wins $\Clique(n+3).$
  Bob wins $\Clique(n)$ for $3 \le n \le 8.$
\end{prop}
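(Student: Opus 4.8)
The plan is to prove the two claims separately, since they have different flavours. The first claim—that a win by Alice on $\Clique(n)$ for $n \ge 8$ forces a win by Bob on $\Clique(n+3)$—should follow by a strategy-stealing argument in the spirit of~\cref{prop:n1mod4}. First I would set up the board $K_{n+3}$ as a copy of $K_n$ on a vertex set $W$ together with three extra vertices $x,y,z$. Bob's plan is to steal Alice's winning strategy on the $K_n$ induced by $W$: he pretends to be the first player on that subboard and follows the $\Clique(n)$-winning strategy there. The key point is that Bob must somehow neutralise the extra structure created by $x,y,z$ and the edges joining them to $W$. The natural device is a pairing strategy: whenever Alice plays an edge incident to $\{x,y,z\}$, Bob responds with a paired edge so that these ``outside'' edges never help Alice build a larger red clique than she could on $W$ alone, while Bob's own clique on $W$ (guaranteed to be at least as large as Alice's by the stolen winning strategy) carries over to $K_{n+3}$.

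The crux of this first part is to make the bookkeeping of the pairing precise and to verify that the responses are always legal (the paired edge is still uncoloured when Bob needs it) and that they genuinely prevent Alice from exploiting $x,y,z$. Here I expect the cited~\cite[Rem.~8]{MS24} to supply the combinatorial heart: the improvement to be made is purely in the threshold, replacing the Ramsey bound $R(4,4)=18$ by $R(3,4)=9$. Concretely, the argument only needs that when Alice wins she must already have produced a red clique strictly larger than Bob's best blue clique on $W$, and that on $n \ge R(3,4)=9$ vertices one of the two colour classes already contains a sufficiently large monochromatic structure to force the desired inequality; I would isolate exactly where $R(4,4)$ was used in~\cite{MS24} and check that $R(3,4)$ suffices there. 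This lowers the range of $n$ that must be handled ``by hand'' down to $n \le 8$.

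For the second claim—that Bob wins $\Clique(n)$ for every $3 \le n \le 8$—the approach is a finite computer verification. Using the implementation of Zermelo's theorem described in~\cref{sec:pc_code}, I would compute $s_{\omega}(n)$ exhaustively for each $n$ in this range and confirm $s_{\omega}(n)=0$ throughout, which is precisely the statement that Bob wins. The excerpt already notes that an independent implementation corroborates this for $n \le 7$, so the only genuinely new verification is $n=8$; I would record the output of both implementations to rule out coding errors.

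The main obstacle will be the first part: guaranteeing that the pairing/strategy-stealing on $K_{n+3}$ really certifies a win for Bob rather than merely a non-loss. Unlike the vertex-capturing game, where ties are possible and ``Bob does not lose'' combined with strategy stealing suffices, here I must ensure the inequality on clique sizes is strict in Bob's favour for the three residue-shifted vertices to pay off; equivalently, I must verify that the three extra vertices can only ever enlarge Alice's clique by amounts that Bob's pairing fully cancels, so that Bob inherits a strictly-not-worse clique from the stolen strategy. Pinning down this strictness—and confirming that the reduced Ramsey threshold $R(3,4)$ is exactly what makes it go through for all $n \ge 8$ rather than only for large $n$—is where the real care is needed.
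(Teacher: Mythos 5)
Your outline matches the paper's proof in broad strokes: a computer check for $3\le n\le 8$, and for the reduction a strategy-stealing argument in which Bob plays first on an embedded $K_n$ while a pairing neutralises the three extra vertices, with a Ramsey-type fact replacing $R(4,4)$ by a bound around $8$--$9$. However, the proposal stops exactly where the proof begins: you never construct the pairing, never specify Bob's opening, and explicitly defer "the combinatorial heart" to \cite{MS24}. The paper's argument is precisely that construction. Concretely: Bob's first two moves (analysed over the five, then three, configurations up to isomorphism) force a triangle $uv_1v_2$ with $uv_1,uv_2$ blue and exactly one coloured (blue) edge inside $V\setminus\{u,v_1,v_2\}$; the pairing is then only on the pairs $\{v_1x,v_2x\}$ for $x$ in the sub-board (edges at $u$ need no pairing), which caps any red clique containing $v_1v_2$ at a $K_4$; and a red clique using at most one of $u,v_1,v_2$ is at most $\omega_A^{\mathrm{sub}}+1$. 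Without naming which edges are paired and why the responses are always available, the claim that the extra vertices "can only ever enlarge Alice's clique by amounts that Bob's pairing fully cancels" is an assertion, not a proof.

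Two further points of calibration. First, your worry about strictness on $K_{n+3}$ is slightly misplaced: since $s_\omega\in\{0,1\}$ always, "Bob wins" means $s_\omega=0$, so Bob only needs $\omega_B\ge\omega_A$ on the full board; the strict inequality is used on the sub-board, where Bob, playing first and stealing Alice's hypothesised winning strategy, gets $\omega_B^{\mathrm{sub}}\ge\omega_A^{\mathrm{sub}}+1$, which absorbs the "$+1$" a single extra vertex can contribute to Alice. Second, the role of the Ramsey-type input is not to lower the threshold of the reduction per se but to guarantee $\omega_B^{\mathrm{sub}}\ge 4$: the paper's observation (every graph on $8$ vertices with $14$ edges has $\omega\ge 3$ or $\alpha\ge 4$, plus $R(3,4)=9$ for larger $n$) shows the winner of $\Clique(n)$ for $n\ge 8$ has clique number at least $4$, which is what beats the $K_4$ that Alice might build through $v_1v_2$. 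You should make both of these quantitative comparisons explicit rather than gesturing at "isolating where $R(4,4)$ was used".
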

\begin{proof}
  The fact that Bob wins $\Clique(n)$ for $3 \le n \le 8$ was done by the implementation. 

  Next, we observe that if Alice wins $\Clique(n)$ for $n \ge 8$, she has at least $4$ points, $\omega(G[A]) \ge 4$.

\begin{obs}
  Every graph with order $n = 8$ and size $14$ satisfies $\omega(G) \ge 3 $ or $\alpha(G)\ge 4$. 
\end{obs}

\begin{claimproof}
  Easily verified by exhaustive computer check. 
  The three graphs of order $8$ satisfying $\omega(G) =2$ and $\alpha(G)=3$ have size $10, 11$ and $12$ (and $R(2,4)<R(3,3)<8$). For more info, one can find these graphs by their HoG Id $588, 640, 26996$ at \url{https://houseofgraphs.org} (\cite{HOG}).
\end{claimproof}

Now, we show a strategy of Bob to ensure he wins $\Clique(n+3).$
At his first move, he colours an edge adjacent with the edge chosen by Alice. Next, Alice can choose between $5$ configurations up to isomorphism (there are $5$ edge orbits).

\begin{figure}[h]
  \centering
  \begin{tikzpicture}[scale=0.6]
    
    \draw[thick, dashed, red] (0:2)--(120:2)--(240:2);
    \draw[thick, blue] (0:2)--(240:2);
    \foreach \t in {0,120,240}{
    \draw[fill] (\t:2) circle (0.25);
    }
  \end{tikzpicture}\quad
  \begin{tikzpicture}
    
    \draw[thick, dashed, red] (0,2)--(0,0);
    \draw[thick, dashed, red] (2,2)--(2,0);
    \draw[thick, blue] (0,0)--(2,0);
    \foreach \t in {0,2}{
    \foreach \y in {0,2}{
    \draw[fill] (\t,\y) circle (0.15);
    }
    }
  \end{tikzpicture}\quad
  \begin{tikzpicture}
    
    \draw[thick, dashed, red] (0,2)--(0,0)--(2,0);
    \draw[thick, blue] (2,2)--(2,0);
    \foreach \t in {0,2}{
    \foreach \y in {0,2}{
    \draw[fill] (\t,\y) circle (0.15);
    }
    }
  \end{tikzpicture}\quad
  \begin{tikzpicture}
    \draw[thick, dashed, red] (0,2)--(0,0)--(2,2);
    \draw[thick, blue] (0,0)--(2,0);
    \foreach \t in {0,2}{
    \foreach \y in {0,2}{
    \draw[fill] (\t,\y) circle (0.15);
    }
    }
  \end{tikzpicture}\qquad
  \begin{tikzpicture}[scale=0.6]
    \draw[thick, dashed, red] (-2,-1.732)--(-2,1.732);
    \draw[thick, dashed, red] (120:2)--(240:2);
    \draw[thick, blue] (0:2)--(240:2);
    \foreach \t in {0,120,240}{
    \draw[fill] (\t:2) circle (0.25);
    }
    \foreach \y in {-1.732,1.732}{
    \draw[fill] (-2,\y) circle (0.25);
    }
  \end{tikzpicture}
  \caption{The $5$ possible situations after $3$ edges have been coloured.}
  \label{fig:3edgescoloured}
\end{figure}

Bob colours a second edge, not incident with his first, to obtain one of the following three configurations (\cref{fig:4edgescoloured}). Note that his lower (the lower one drawn in \cref{fig:4edgescoloured}) blue edge is incident with both of the red edges. 
One can check each of those $3$ cases separately. Independently of what Alice does in her third turn, Bob can answer, ensuring a triangle $uv_1v_2$ with two out of three edges coloured blue (called $uv_1$ and $uv_2$), the subgraph spanned by $V \setminus \{u, v_1, v_2\}$ containing exactly one coloured edge, which is blue. We show this by giving his action in three cases.

\begin{itemize}
    \item 
If Alice's third coloured edge is not incident with any of the previous coloured edges, Bob can colour an edge incident to that edge and his lower edge.

\item
If Alice chooses an edge which contains one additional vertex, Bob can colour an edge connecting that additional vertex with his lower edge.
\item
If Alice chooses an edge among vertices incident to other coloured edges, either Bob can colour an edge incident to a new vertex and his lower edge, or between the lower edge and the unique vertex which was incident to red edges, but none of the blue edges.
\end{itemize}

In the remaining of the game, Bob colours an edge spanned by $V \setminus \{u, v_1, v_2\}$ whenever Alice coloured one there, using the strategy on the $K_n$ spanned by $V \setminus \{u, v_1, v_2\}$ to win (as first player).
If Alice coloured an other edge (incident with at least one of $\{u,v_1,v_2\})$, Bob colours an uncoloured edge from a pair $\{v_1x, v_2x\}$ with $x \in V \setminus \{u, v_1, v_2\}$ where the other edge is red, whenever possible, and otherwise a random edge.
By strategy stealing, Bob can obtain a larger clique on the $K_n$ spanned by $V \setminus \{u, v_1, v_2\}$ and thus not lose on $K_{n+3}$. For this, note that the largest clique on $K_{n+3}$ coloured by Alice is bounded by the largest one containing $v_1v_2$ (at most a $K_4$, since she could not colour three additional pairs in red) and the largest clique on $V \setminus \{u, v_1, v_2\}$ plus one.

\begin{figure}[h]
  \centering
 
  \begin{tikzpicture}
    \draw[thick, dashed, red] (1,2)--(0,0);
    \draw[thick, dashed, red] (1,2)--(2,0);
    \draw[thick, blue] (0,0)--(2,0);
    \draw[thick, blue] (1,2)--(3,2);
    \foreach \t in {0,2}{
    \foreach \y in {0,2}{
    \draw[fill] (\t+\y/2,\y) circle (0.15);
    }
    }
  \end{tikzpicture}\quad
  \begin{tikzpicture}
    
    \draw[thick, dashed, red] (0,2)--(0,0);
    \draw[thick, dashed, red] (2,2)--(2,0);
    \draw[thick, blue] (0,0)--(2,0);
    \draw[thick, blue] (4,2)--(2,2);
    \foreach \t in {0,2}{
    \foreach \y in {0,2}{
    \draw[fill] (\t,\y) circle (0.15);
    }
    }
    \draw[fill] (4,2) circle (0.15);
  \end{tikzpicture}\quad
  \begin{tikzpicture}
    \draw[thick, blue] (4,2)--(2,2);
    \draw[thick, dashed, red] (0,2)--(0,0)--(2,2);
    \draw[thick, blue] (0,0)--(2,0);
    \foreach \t in {0,2}{
    \foreach \y in {0,2}{
    \draw[fill] (\t,\y) circle (0.15);
    }
    }
    \draw[fill] (4,2) circle (0.15);
  \end{tikzpicture}
  \caption{The $3$ possible situations after Bob colours its second edge.}
  \label{fig:4edgescoloured}
\end{figure}
\end{proof}

\section{The biased games for large $n$}\label{sec:biased_pq}

We answer the final question in the conclusion of~\cite{MS24}, by noticing the relation with a result on the (degree) balancing game by Beck~\cite[Thm.~17.5]{Beck08}.

\begin{thr}\label{thm:biased_D_vc}
    When $p<q$ are integers, then $\Delta_{(p,q)}(n)$ and $\VC_{(p,q)}(n)$ are both player $2$ wins provided that $n$ is sufficiently large.
\end{thr}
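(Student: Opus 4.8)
The plan is to reduce both games to a single structural goal for Bob: forcing $\deg_A(v) < \deg_B(v)$ at \emph{every} vertex $v$, where $\deg_A(v)$ and $\deg_B(v)$ denote the numbers of red and blue edges incident to $v$ in the final colouring. For the $\VC$-game this condition is decisive at once: if every vertex has a strict blue majority among its incident edges, then Bob captures all $n$ vertices, so $(a,b)=(0,n)$ and $b>a$. For the Star game it is nearly as quick: letting $v^\ast$ be a vertex attaining $\deg_A(v^\ast)=\Delta(G[A])$, the condition gives
\[
\Delta(G[B])\ \ge\ \deg_B(v^\ast)\ >\ \deg_A(v^\ast)\ =\ \Delta(G[A]),
\]
so again $b>a$. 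Since $\deg_A(v)+\deg_B(v)=n-1$ once all edges are coloured, this goal is equivalent to the purely one-sided requirement $\max_{v}\deg_A(v) < \tfrac{n-1}{2}$; everything thus reduces to showing that Bob can keep the maximum red degree below half.

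For this I would invoke the degree-balancing result of Beck~\cite[Thm.~17.5]{Beck08}. Playing as the balancer with his $q\ge p+1$ edges each turn, Bob can force the red degrees to concentrate around their fair share: at every vertex $v$,
\[
\deg_A(v)\ \le\ \frac{p}{p+q}\,(n-1) + O\!\left(\sqrt{n\log n}\right).
\]
The conclusion is then purely quantitative. Because $p<q$ we have $\frac{p}{p+q}<\frac12$, so the fair share $\frac{p}{p+q}(n-1)$ lies below $\frac{n-1}{2}$ by a linear margin $\frac{q-p}{2(p+q)}(n-1)=\Omega(n)$, which dominates the $O(\sqrt{n\log n})$ fluctuation for all sufficiently large $n$. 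Hence $\max_v\deg_A(v)<\frac{n-1}{2}$, and by the reduction Bob wins both games once $n$ exceeds a threshold depending on $p$ and $q$.

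The main obstacle is the second step: ensuring the balancing bound is centred at the \emph{biased} fair share $\frac{p}{p+q}(n-1)$ rather than at the symmetric value $\frac12(n-1)$. A symmetric balancing guarantee alone — keeping $|\deg_A(v)-\deg_B(v)|$ small — would only pin both degrees near $\frac{n-1}{2}$ and would fail to separate them, so the bias must genuinely enter. I would therefore either apply Beck's theorem in its $(p,q)$-biased form directly, or reduce to it by having Bob spend $p$ of his $q$ moves each turn on a symmetric balancing subgame against Alice while devoting the surplus $q-p$ moves to colouring a roughly uniform spread of edges blue, lowering every red ceiling by the required linear amount; one then checks that the two components interfere only up to an $o(n)$ error. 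The remaining bookkeeping — the last partial turn when $\binom{n}{2}$ is not a multiple of $p+q$, and the parity of $n-1$ needed for the \emph{strict} inequality — contributes only $O(1)$ and is absorbed by the $\Omega(n)$ margin.
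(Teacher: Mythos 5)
Your proposal is correct and follows essentially the same route as the paper: both reduce the two games to a pointwise degree comparison and invoke Beck's degree-balancing theorem (Thm.~17.5 in~\cite{Beck08}) to pin every vertex's degree near the biased fair share, concluding via $\delta_B > \tfrac n2 > \Delta_A$ since $\tfrac{q}{p+q}>\tfrac12$. The only cosmetic difference is that you bound the maximum red degree from above while the paper bounds the minimum blue degree from below, which are equivalent because the two degrees sum to $n-1$ at every vertex.
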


\begin{proof}
    Bob can play as the first player for the balancing game from~\cite[Thm.~17.5]{Beck08}, neglecting the first $p$ edges from Alice.
Consequently, he can ensure that every vertex has blue degree $\frac{q+o_n(1)}{p+q}n$ (recall that $p=o(n)$ as well). For $n$ sufficiently large, all of these degrees are larger than $\frac n2$ and so Bob wins both the maximum degree game and the vertex capturing game, since $\delta_B> \frac n2 > \Delta_A$.
\end{proof}

It can be observed that for $(p,q)=(1,2),$ $n \ge 3$ is sufficient.

\begin{prop}\label{prop:(1,2)}
    The games $\Delta_{(1,2)}(n)$ and $\VC_{(1,2)}(n)$ are both player $2$ wins provided that $n\ge 3.$
\end{prop}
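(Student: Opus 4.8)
The plan is to reduce both games to a single statement about final red and blue degrees and then exhibit one strategy for Bob that settles them together. Write $\mathrm{red}_v$ and $\mathrm{blue}_v$ for the numbers of red and blue edges at a vertex $v$ at the end of the game, so $\mathrm{red}_v+\mathrm{blue}_v=n-1$. For $(p,q)=(1,2)$ Bob wins $\Delta_{(1,2)}(n)$ as soon as $\max_v \mathrm{blue}_v>\max_v\mathrm{red}_v$, and wins $\VC_{(1,2)}(n)$ as soon as the number of vertices with $\mathrm{blue}_v>\mathrm{red}_v$ exceeds the number with $\mathrm{red}_v>\mathrm{blue}_v$. Two elementary facts will be used throughout: since Bob colours twice as many edges as Alice, $|B|>|A|$ for every $n\ge 3$; and when $n$ is even each degree $n-1$ is odd, so no vertex can satisfy $\mathrm{red}_v=\mathrm{blue}_v$ and every vertex is decided (captured by one of the players).

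First I would fix the following strategy for Bob: whenever Alice colours an edge $xy$ red, Bob answers by colouring (blue) one still-uncoloured edge at $x$ and one still-uncoloured edge at $y$, whenever such edges exist; any move he cannot use in this way he spends on an arbitrary uncoloured edge. The point of this \emph{respond-at-both-endpoints} rule is the invariant it forces: each red edge incident to $v$ triggers one attempt by Bob to add a blue edge at $v$, and such an attempt can fail only when the edge Alice just played was the last uncoloured edge at $v$ — which happens at most once per vertex. Hence at the end $\mathrm{blue}_v\ge \mathrm{red}_v-1$ for every vertex $v$ (and typically much more, since $v$ also collects the far endpoints of Bob's other blue edges and his spare moves).

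From $\mathrm{blue}_v\ge\mathrm{red}_v-1$ together with $\mathrm{blue}_v=n-1-\mathrm{red}_v$ one gets $\mathrm{red}_v\le n/2$, so $\max_v\mathrm{red}_v\le\bfloor{n/2}$. For the maximum-degree game it then suffices to produce one large blue degree: averaging $\sum_v\mathrm{red}_v=2|A|\le\tfrac13 n(n-1)+O(1)$ over the $n$ vertices yields a vertex with $\mathrm{red}_v\le\tfrac{n-1}{3}+o(n)$, hence $\mathrm{blue}_v\ge\tfrac23(n-1)-o(n)>\tfrac n2\ge\max_v\mathrm{red}_v$ for $n\ge 5$. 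For the vertex-capturing game with $n$ \emph{odd} the invariant already closes the argument: $\mathrm{red}_v>\mathrm{blue}_v$ would force $\mathrm{red}_v=\mathrm{blue}_v+1$, making $n-1=2\mathrm{blue}_v+1$ even, which is impossible; so no vertex is red-captured, while $|B|>|A|$ guarantees at least one blue-captured vertex. The small cases $n\in\{3,4\}$ (and, if the estimates are stated loosely, $n\in\{5,6\}$) are checked by hand.

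The main obstacle is the single admissible failure of the invariant, the \emph{saturation} case in which Alice colours the last edge at a vertex before Bob can answer there; this is precisely what blocks the stronger bound $\mathrm{blue}_v\ge\mathrm{red}_v$ (already false on $K_4$, where Alice can force a red degree $2$) and is the only source of red-captured vertices. It is harmless for the degree game and, by the parity remark, for the capturing game when $n$ is odd, but for the capturing game with $n$ \emph{even} I expect the real work: one must bound the number $a$ of red-captured vertices and prove $a<n/2$. The natural route is to refine Bob's rule so that he spends the spare move he gains at each saturation to pre-empt the next threatened vertex — one that is tied with a single uncoloured edge remaining — converting a would-be red capture into a blue one. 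Making this charging of spare moves against threatened vertices rigorous, rather than merely heuristic, is where the difficulty lies.
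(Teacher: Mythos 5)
There is a genuine gap, and you have named it yourself: the vertex-capturing game for even $n$ is not proved. Your invariant $\mathrm{blue}_v\ge \mathrm{red}_v-1$ only yields $\mathrm{red}_v\le n/2$, which for even $n$ still permits red-captured vertices ($\mathrm{red}_v=n/2$, $\mathrm{blue}_v=n/2-1$), and the global count $\sum_v \mathrm{red}_v=2|A|\approx n(n-1)/3$ only bounds the number of such vertices by roughly $2n/3$, which is not below $n/2$. The proposed fix (charging Bob's spare moves against ``threatened'' vertices) is left explicitly heuristic, so the statement is not established for half of the values of $n$ in the VC game. A secondary problem is that the small cases of the degree game are not actually ``checked by hand'': your respond-at-both-endpoints strategy genuinely fails on $K_4$ (Alice plays $ab$; Bob answers $ac,bc$; Alice plays $ad$; the outcome is $\Delta_A=\Delta_B=2$, a tie), so for $n=4$ (and similarly $n\in\{5,6\}$, where your averaging inequality $\tfrac{2(n-1)}{3}-\tfrac{2}{n}>\lfloor n/2\rfloor$ only holds from $n=7$ on) one must exhibit a different strategy, not merely tighten estimates.

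For comparison, the paper avoids all of this with a non-adaptive pairing strategy: split the vertices into $k$ triples $\{u_i,v_i,w_i\}$ (plus at most two leftover vertices $x,y$), partition the edge set into triples of edges such as $\{u_iu_j,v_iv_j,w_iw_j\}$, $\{u_iv_i,u_iw_i,v_iw_i\}$, $\{xu_i,xv_i,xw_i\}$, and have Bob complete whichever edge-triple Alice touches. Each edge-triple contributes to the vertex-triples in a fixed $2{:}1$ blue-to-red ratio, which settles both the maximum-degree and the vertex-capturing game simultaneously, uniformly for all $n\ge 3$ and without any parity split or asymptotic estimate. The structural (partition-based) response is what makes the even case of VC go through; your local degree invariant is too weak to do so.
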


\begin{proof}
    Write $n=3k+r$, where $k \ge 1$ and $0 \le r \le 2.$
    Let the vertices be partitioned in $U=\{u_1, u_2, \ldots, u_k\}, V=\{v_1,v_2, \ldots, v_k\}$ and $W=\{w_1, \ldots, w_k\}$, and also $x$ if $r=1$ or $x$ and $y$ if $r=2$.

    Bob colours the two other vertices for triples of the following form:
    $$\{u_iu_j,v_iv_j,w_iw_j\}, \{u_iv_i, u_iw_i,v_iw_i\}, \{u_iv_j,v_iw_j,w_iv_j\},\{xu_i,xv_i,xw_i\},\{yu_i,yv_i,yw_i\},$$ where $1 \le i,j \le k.$
    In this case the number of blue incident edges to (at least one) of $u_i, v_i, w_i$ is double the number of red such edges.
    As such, $\Delta_A< \Delta_B$ and Bob captured at least $k$ vertices more than Alice did.
\end{proof}

Next, we consider regimes in which $\Clique_{(p,q)}(n)$ is won by the second player.

By applying~\cite[Thm.~2]{ES72}, we get a general improvement for the general biased clique game.
We recall the latter statement for convenience, formulated slightly different (the way applied in~\cite[Thm.~1]{ES72}).

\begin{thr}[\cite{ES72}]\label{thr:ES72}
    Let $G$ be a graph of order $n$ and size bounded by $\frac{\binom n2}{k}$. Then $\max\{\omega (G), \alpha (G)\} > c_4 \frac k{\log k} \log n.$
\end{thr}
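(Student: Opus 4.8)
The statement is the Erd\H{o}s--Szemer\'edi Ramsey--Tur\'an-type bound, so the plan is to lower bound $\max\{\omega(G),\alpha(G)\}$ for a graph $G$ of density $\rho := e(G)/\binom n2 \le 1/k$. The first thing I would record is the cheap half of the bound: by the Caro--Wei inequality $\alpha(G) \ge \sum_{v}\frac{1}{d_v+1} \ge \frac{n}{\bar d+1}$, and since the average degree satisfies $\bar d = 2e(G)/n \le (n-1)/k$, this already gives $\alpha(G) = \Omega(k)$ from sparsity alone. Crucially, this is all a greedy or degree-based argument can give: for a disjoint union of cliques the Caro--Wei bound is tight, so the remaining factor $\frac{\log n}{\log k}$ cannot come from sparsity alone and must be extracted from the hypothesis that the clique number is small. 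I would therefore organise the proof as a dichotomy: fix a target $T$, and assume $\omega(G)\le T$ (otherwise $\max\{\omega,\alpha\}\ge T$ and we are done); the task is then to show that a sparse, $K_{T+1}$-free graph has an independent set of size $\gtrsim \frac{k}{\log k}\log n$.

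The plan for the independent-set bound is a two-step reduction followed by a Ramsey--Tur\'an estimate. First, use sparsity to pass to a bounded-degree piece: at least $n/2$ vertices have degree at most $2\bar d \le 2n/k$, so restricting to this set $W$ yields an induced subgraph on $N \ge n/2$ vertices with maximum degree $\Delta \le 2n/k = O(N/k)$, still $K_{T+1}$-free. Second, on this piece I would invoke a Ramsey--Tur\'an-type lower bound for the independence number of a $K_{T+1}$-free graph of maximum degree $\Delta$, of the shape $\alpha \gtrsim \frac{N}{\Delta}\cdot\frac{\log\Delta}{\log k}$ (the elementary Erd\H{o}s--Szemer\'edi precursor of the Ajtai--Koml\'os--Szemer\'edi and Shearer bounds, provable by a recursion on the clique number, or by the classical Ramsey recursion applied to a well-chosen vertex, splitting into its neighbourhood to grow a clique and its non-neighbourhood to grow an independent set). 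Substituting $N\asymp n$ and $\Delta \asymp n/k$ turns $\frac{N}{\Delta}$ into $\Theta(k)$ and $\log\Delta$ into $\Theta(\log n)$, yielding $\alpha \gtrsim \frac{k\log n}{\log k}$; the role of the hypothesis $\omega\le T$ is precisely to force the independent-set branch of the recursion rather than letting it terminate in a clique, and optimising the trade-off between the two branches produces the claimed bound $\max\{\omega,\alpha\} > c_4\frac{k}{\log k}\log n$.

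The main obstacle is exactly this Ramsey--Tur\'an estimate, i.e.\ showing that bounded clique number buys the extra logarithmic factor over Caro--Wei. The difficulty is structural: neither edge density nor maximum degree is preserved when one restricts to the neighbourhood of a vertex (a sparse graph may have a dense neighbourhood), so a naive recursion loses control of the edge budget after $O(k)$ steps -- precisely the point at which the greedy bound stalls. I would handle this by an averaging and amortised argument, bounding the total number of edges summed over each level of the recursion tree rather than demanding density control at every node, and by tracking the clique number as a separate parameter so that the ``clique side'' of the recursion terminates after only $O(\log_k n)$ steps (neighbourhoods being a $\frac1k$-fraction) while the ``independent-set side'' runs to depth $\Theta(k\log n/\log k)$. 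Pinning down the constant $c_4$ and the exact $\frac{1}{\log k}$ dependence is then careful bookkeeping; I would expect the elementary argument to lose a factor of $\log k$ against the truth $\Theta(k\log n)$ witnessed by $G(n,1/k)$, which is exactly why the bound carries $\log k$ in the denominator rather than being linear in $k$.
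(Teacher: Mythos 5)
First, a point of reference: the paper does not prove this statement at all --- it is quoted verbatim (``we recall the latter statement for convenience'') from Erd\H{o}s and Szemer\'edi \cite{ES72} and then used as a black box in the proof of Theorem~\ref{thr:game1_biased}. So there is no in-paper proof to compare against; your attempt has to be judged as a reconstruction of the original argument.

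As such, there is a genuine gap, and it sits exactly where you say the ``main obstacle'' is. Your reductions are fine: the Caro--Wei bound gives $\alpha(G)=\Omega(k)$ from sparsity alone, and Markov's inequality on degrees yields an induced subgraph on $N\ge n/2$ vertices with $\Delta=O(n/k)$ that is still $K_{T+1}$-free. But the entire content of the theorem is then pushed into the asserted lemma that such a graph has $\alpha\gtrsim \frac{N}{\Delta}\cdot\frac{\log\Delta}{\log k}$, and that lemma is neither well-formed nor proved. It is not well-formed because $k$ is not a parameter of the reduced problem (a bounded-degree $K_{T+1}$-free graph knows nothing about $k$); what you actually need is $\max\{\omega,\alpha\}\gtrsim \frac{N}{\Delta}\cdot\frac{\log N}{\log(N/\Delta)}$, which is just the original theorem restated with $k=N/\Delta$, so the reduction is circular. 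It is not proved because the two mechanisms you offer do not survive inspection. The ``classical Ramsey recursion'' stalls: each independent-set step removes at most $\Delta+1$ vertices, so after $\Theta(k)$ steps the live set has halved and the ratio $N/\Delta$ has degraded --- this reproduces the greedy $\Omega(k)$ bound and nothing more. Meanwhile a single clique step replaces the live set by a neighbourhood of size at most $\Delta$, on which the maximum-degree hypothesis gives no information whatsoever (the neighbourhood may be nearly complete), so the claim that ``the clique side terminates after $O(\log_k n)$ steps, neighbourhoods being a $\frac1k$-fraction'' fails after the first such step: second-level neighbourhoods are not $\frac1k$-fractions of their new ground sets, precisely because sparsity is not inherited. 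If instead one caps the clique branch at depth $T$ and runs the unbiased Erd\H{o}s--Szekeres count, one only recovers $\max\{\omega,\alpha\}\ge\frac12\log_2 n$, with no gain from sparsity at all. The ``averaging and amortised argument over the recursion tree'' is the one idea that could rescue this, but it is stated as an intention rather than carried out, and carrying it out is the Erd\H{o}s--Szemer\'edi theorem. In short: the scaffolding is right, the numerology at the end ($G(n,1/k)$ losing a $\log k$ factor) is right, but the proof of the key estimate is missing.
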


\begin{thr}\label{thr:game1_biased}
    There is a constant $c$ such that if $q>cp^2$, then for every $n$ sufficiently large, $\Clique_{(p,q)}(n)$ is won by player $2.$
\end{thr}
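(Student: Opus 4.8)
The plan is to exploit the fact that, since every edge is eventually coloured, Alice's red graph $G[A]$ and Bob's blue graph $G[B]$ are complementary in $K_n$; in particular $a=\omega(G[A])$ while $b=\omega(G[B])=\alpha(G[A])$. Because Alice colours only $p$ out of every $p+q$ edges, at the end $G[A]$ has at most $\frac{p}{p+q}\binom n2+p$ edges, which for $n$ large is at most $\binom n2/k$ with $k=\lfloor q/p\rfloor$ (the slack $\frac{p^2}{q(p+q)}\binom n2$ easily absorbs the $+p$). Applying~\cref{thr:ES72} to $G[A]$ with this $k$ therefore yields, \emph{regardless of how either player plays},
$$\max\{a,b\}=\max\{\omega(G[A]),\alpha(G[A])\} > c_4\,\frac{k}{\log k}\,\log n \gtrsim \frac{q/p}{\log(q/p)}\,\log n.$$
The decisive observation is that this already hands Bob a large \emph{blue} clique \emph{for free}: he never needs to build his clique actively, since a large independent set of the sparse red graph is exactly a blue clique, and the bound above forces either $a$ or $b$ to be large. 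So the whole game reduces to bounding $a=\omega(G[A])$ \emph{from above}.

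For that, I would have Bob play a pure Breaker strategy against Alice's clique-building. Regard the edge sets of the $\binom nt$ copies of $K_t$ in $K_n$ as the winning sets of a $(p:q)$ Maker--Breaker game in which Alice is Maker and Bob (the second player) is Breaker, and let Bob follow the potential-minimising strategy of the biased Erd\H{o}s--Selfridge criterion (see, e.g.,~\cite{Beck08}). That criterion guarantees that Breaker meets every winning set --- i.e. Bob colours at least one edge of every copy of $K_t$ blue, so no all-red $K_t$ survives --- as soon as
$$\binom nt (1+q)^{-\binom t2/p} < \tfrac1{1+q}.$$
Using $\binom nt\le n^t$ and solving for $t$, this holds once $t\ge t_0$ with $t_0 = 1+\frac{2p\log n}{\log(1+q)} = O\!\left(\frac{p\log n}{\log q}\right)$. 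Hence, with target $t_0$, Bob can enforce $a<t_0$.

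Combining the two bounds finishes the argument. The Erd\H{o}s--Szemer\'edi lower bound on $\max\{a,b\}$ grows like $q/p$ up to a logarithmic factor, while the upper bound $t_0$ on $a$ grows like $p\log n/\log q$; their ratio is of order $\frac{q}{p^2}\cdot\frac{\log q}{\log(q/p)}\ge \frac{q}{p^2}$, using $\log(q/p)\le\log q$. Choosing $c$ (essentially $c=2/c_4$) so that $q>cp^2$ makes this ratio exceed $1$, so for all $n$ sufficiently large the Erd\H{o}s--Szemer\'edi bound strictly exceeds $t_0$. Since $a<t_0$, it follows that $b>t_0>a$; as $q>p$, the strict inequality $b>a$ is precisely a win for player $2$.

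The main obstacle is the middle step: pinning down the exact hypotheses of the biased Erd\H{o}s--Selfridge criterion (the admissible move order with Maker first and Breaker second, and the precise constant in the $<\frac1{1+q}$ threshold) so as to obtain a clean bound $t_0=O(p\log n/\log q)$, and then verifying that the two thresholds genuinely separate for large $n$ without integer rounding spoiling $b>a$. The quadratic shape $q>cp^2$ is exactly what survives after cancelling the common factor $\log n$ and absorbing the $\frac{\log q}{\log(q/p)}$ term; I expect this is the origin of the ``quadratic'' bound advertised in the introduction, and that improving to the conditional linear bound would require controlling $\omega(G[A])$ more sharply than Erd\H{o}s--Selfridge alone allows.
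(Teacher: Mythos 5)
Your proposal is correct and follows essentially the same route as the paper: bound the density of Alice's red graph, apply the Erd\H{o}s--Szemer\'edi theorem (\cref{thr:ES72}) to conclude that either the red clique number or the red independence number (equivalently, Bob's blue clique number) is at least of order $\frac{q/p}{\log(q/p)}\log n$, and have Bob play a pure Breaker strategy (the paper cites Beck's Theorem 20.1, which is the biased Erd\H{o}s--Selfridge bound you invoke) to cap $a$ at $O\!\left(\frac{p}{\log q}\log n\right)$, whence $q>cp^2$ with $c=2/c_4$ separates the two thresholds. The only cosmetic difference is that you derive the Breaker threshold $t_0$ from first principles rather than quoting it.
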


\begin{proof}
    Let $c=\frac 2{c_4},$ where $c_4$ is as in~\cref{thr:ES72}.
    Then for $n$ sufficiently large, the ratio of edges coloured by the first player is bounded by $p+\frac{p}{p+q}\binom{n}{2}<\frac{\binom n2}{k}$ for $q+1 \ge k\ge \frac qp > \frac 2{c_4}p.$
    By~\cref{thr:ES72}, the graph of at least one of the two players contains a clique of order no less than $c_4 \frac k{\log k} \log n.$
    
    By~\cite[Thm.~20.1]{Beck08} on the other hand, the second player can force the largest clique induced by the colour of the first player to be bounded by $ \frac{2p}{\log(q+1)} \log n.$

    Since $c_4 \frac k{\log k}>\frac{2p}{\log(q+1)},$ we conclude the second player wins.
\end{proof}

For further (conditional) improvement, we consider Maker-Breaker games, in which Maker tries to obtain a clique $K_q$ in its colour, and Breaker tries to prevent that.

Let \( f_N(m, b) \) denote the largest \( q \) such that Maker can occupy a \( K_q \) in the \( (m : b) \) game (Maker has $m$ edges and Breaker has $b$ edges) played on $K_N$.
The value of $f_N(1,1)\sim 2 \log_2 N$ is known exactly by Beck~\cite[Thm.~6.4]{Beck08}. Beck~\cite[Thm.~30.2]{Beck08} also gave lower bounds for $f_N(m,1)$ of the form $2(1+o(1))\log_{(m+1)/m}(N)$ and
Gebauer~\cite[Thm.~1.3]{Gebauer12} gave a lower bound on \( f_N(m, b) \), $(1+o(1))m \log_{b+1}(N)$.

Motivated by these lower bounds, we state the following conjecture.

\begin{conj}\label{conj:strictmonotonicity}
    For fixed $b,m$, for $N$ sufficiently large $f_N(m, b+1)<f_N(m, b)<f_N(m+1, b)$ and the differences tend to infinity as $N$ goes to infinity.
\end{conj}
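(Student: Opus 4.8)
\emph{Reduction to a sharp asymptotic.} The function $f_N(m,b)$ is nondecreasing in $N$ and, by the quoted bounds, satisfies $f_N(m,b)=\Theta(\log N)$. The plan is therefore to show that $f_N(m,b)=(c(m,b)+o(1))\log N$ for a constant $c(m,b)$ that is strictly increasing in $m$ and strictly decreasing in $b$. Granting this, the two quantities compared in each inequality of \cref{conj:strictmonotonicity} are asymptotic to constant multiples of $\log N$ with different slopes, so both the strict inequalities and the divergence of the gaps (which become $\Theta(\log N)$) follow at once for large $N$. The two natural guesses for $c(m,b)$ both have the required monotonicity: the Erdős--Selfridge-type upper bound suggests $c(m,b)=2m/\ln(b+1)$, while the random-graph heuristic (Maker's graph behaving like $G(N,\tfrac{m}{m+b})$, of clique number $\sim 2\log_{(m+b)/m}N$) suggests $c(m,b)=2/\ln\!\big(\tfrac{m+b}{m}\big)$; these agree at $m=1$ with the exact value $f_N(1,1)\sim 2\log_2 N$ of \cite[Thm.~6.4]{Beck08}, and each is strictly increasing in $m$ and strictly decreasing in $b$. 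Hence any proof that pins down the leading constant of the biased clique game settles the conjecture.

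\emph{Weak monotonicity.} As a first step, the inequalities $f_N(m,b+1)\le f_N(m,b)\le f_N(m+1,b)$ hold for \emph{every} $N$ by imaginary-move arguments. For the left one, a Maker strategy that builds $K_q$ against $b+1$ Breaker edges can be run by Maker against only $b$ edges: each round Maker supplements Breaker's real $b$ edges by one arbitrary still-free ``phantom'' edge and then follows the $(m:b+1)$ strategy. The real Breaker set is contained in the simulated one, so the guaranteed $K_q$ among Maker's edges avoids every edge Breaker actually claimed, and Maker really owns it. For the right one, Maker runs the $(m:b)$ strategy with $m$ of its $m+1$ edges and wastes the extra edge, which can only enlarge Maker's graph. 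The entire difficulty is upgrading ``$\le$'' to ``$<$'' with gaps tending to infinity.

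\emph{What the current bounds already give.} Writing the upper bound of \cite[Thm.~20.1]{Beck08} as $f_N(m,b)\le(2+o(1))\,m\log_{b+1}N$ and combining it with the lower bounds, some pairs separate unconditionally. In the Breaker direction at $b=1$, \cite[Thm.~30.2]{Beck08} gives $f_N(m,1)\ge (2-o(1))\log_{(m+1)/m}N=(2-o(1))\tfrac{\ln N}{\ln(1+1/m)}$, while the upper bound gives $f_N(m,2)\le(2+o(1))\tfrac{m\ln N}{\ln 3}$. Since $m\ln(1+\tfrac1m)=\ln\!\big((1+\tfrac1m)^m\big)<\ln e<\ln 3$, we get $\tfrac{2}{\ln(1+1/m)}>\tfrac{2m}{\ln 3}$ for all $m\ge 1$, and therefore $f_N(m,1)-f_N(m,2)=\Theta(\log N)\to\infty$. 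The same comparison handles a few Maker-direction cases (small $m$, $b=1$), where \cite[Thm.~30.2]{Beck08} still beats \cite[Thm.~20.1]{Beck08}.

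\emph{The main obstacle.} For general $b$ the best available lower bound is Gebauer's $f_N(m,b)\ge(1-o(1))\,m\log_{b+1}N$ \cite[Thm.~1.3]{Gebauer12}, a factor of up to $2$ below the upper bound $(2+o(1))\,m\log_{b+1}N$. This factor-$2$ gap is exactly what blocks the general case: the Breaker step $(m,b)\to(m,b+1)$ requires the single-game lower/upper ratio to exceed $\ln(b+1)/\ln(b+2)$ (which lies strictly between $\tfrac12$ and $1$ for $b\ge2$), and the Maker step $(m,b)\to(m+1,b)$ requires it to exceed $\tfrac{m}{m+1}$; the current ratio $\tfrac12$ satisfies neither for general $m,b$. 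The crux is thus to close this gap --- to prove that the Erdős--Selfridge bound is asymptotically tight, or equivalently that Maker attains the random-graph clique number --- by strengthening Gebauer's potential/pairing strategy so that its leading constant grows from $1$ toward $2$ for all $m,b$, as is known only at $(1,1)$ and, for $b=1$, through \cite[Thm.~30.2]{Beck08}. I expect this to be the main difficulty; a tempting shortcut --- running two independent clique processes on disjoint vertex sets to manufacture the extra growth in the $(m+1:b)$ game --- founders on Breaker's freedom to concentrate all $b$ edges wherever Maker is currently ahead.
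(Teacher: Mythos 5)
The statement you were asked to prove is a \emph{conjecture}: the paper offers no proof of it, only motivation from the known lower bounds of Beck and Gebauer, plus the remark that the first inequality for $m=1$ would follow from the (still open) final conjecture of Bednarska and {\L}uczak. Your proposal, to its credit, does not pretend otherwise: it reduces the conjecture to determining the leading constant of $f_N(m,b)$, proves the weak inequalities $f_N(m,b+1)\le f_N(m,b)\le f_N(m+1,b)$ by standard imaginary-move/strategy-monotonicity arguments (these are correct), and gives a genuinely valid unconditional instance: for $b=1$ the Breaker-direction separation $f_N(m,1)-f_N(m,2)=\Theta(\log N)$ follows from comparing Beck's lower bound $2(1+o(1))\log_{(m+1)/m}N$ with the Erd\H{o}s--Selfridge-type upper bound $2m\log_3 N$, via $(1+1/m)^m<e<3$. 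That computation checks out and is a nice complement to the paper's own remark, which concerns the orthogonal special case $m=1$, general $b$.

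The genuine gap is simply that the conjecture remains unproven, and your proposed reduction does not close it. Two specific weaknesses: (i) the reduction presupposes that $f_N(m,b)/\log N$ converges to a constant $c(m,b)$ at all, which is itself unknown for general $(m,b)$ (your two candidate values $2m/\ln(b+1)$ and $2/\ln((m+b)/m)$ disagree already at $(m,b)=(2,1)$, and Beck's $b=1$ result favours the second, so the Erd\H{o}s--Selfridge constant is presumably not the truth); and (ii) the factor-$2$ gap between Gebauer's lower bound and the upper bound, which you correctly identify as the obstruction, is exactly the open problem --- naming it is not the same as overcoming it. Your diagnosis of the required lower/upper ratios ($>\ln(b+1)/\ln(b+2)$ for the Breaker step, $>m/(m+1)$ for the Maker step) is a useful quantification of the difficulty, but the conjecture's status after your write-up is the same as before it: open, except for the weak inequalities and the isolated cases where existing bounds already separate.
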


The first inequality for $m=1$ would follow from the unsolved final conjecture by Bednarska and {\L}uczak~\cite{BL00}.

This strict monotonicity immediately implies that~\cref{thr:game1_biased} can be further improved.

\begin{thr}
    If~\cref{conj:strictmonotonicity} is true, then for fixed $p$ and $n$ sufficiently large, $\Clique_{(p,2p+1)}(n)$ is won by player $2.$
\end{thr}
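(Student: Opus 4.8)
The plan is to let Bob split the $2p+1$ edges he colours on each turn into $p$ edges devoted to \emph{blocking} Alice's red clique and $p+1$ edges devoted to \emph{building} his own blue clique, and then to read off the outcome from two associated Maker--Breaker games via~\cref{conj:strictmonotonicity} (throughout, $f_n$ denotes $f_N$ with $N=n$, since the game is played on $K_n$). Concretely, I would consider two auxiliary games on the same board. In game~(a), Alice is Maker and Bob is Breaker: Bob runs, with $p$ of his edges per turn, a Breaker strategy for the $(p:p)$ clique game. In game~(b), Bob is Maker and Alice is Breaker: Bob runs, with $p+1$ of his edges per turn, a Maker strategy for the $(p+1:p)$ clique game. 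Since each strategy prescribes at most $p$, respectively $p+1$, uncoloured edges, their union has size at most $2p+1$, so Bob can honour both prescriptions on every turn, filling in arbitrary uncoloured edges if the two prescriptions happen to overlap.

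First I would verify that the two sub-strategies do not interfere. From the point of view of game~(a), every blue edge of Bob is a Breaker edge, so the extra $p+1$ ``building'' edges are merely additional Breaker claims, which can only help Breaker; hence the $(p:p)$ Breaker strategy still prevents a red $K_{f_n(p,p)+1}$, giving $\omega(G[A]) \le f_n(p,p)$. Symmetrically, from the point of view of game~(b), every blue edge of Bob is a Maker edge, so the $p$ ``blocking'' edges are only bonus Maker moves; hence the $(p+1:p)$ Maker strategy guarantees a blue $K_{f_n(p+1,p)}$, giving $\omega(G[B]) \ge f_n(p+1,p)$. In each auxiliary game Alice contributes exactly $p$ edges per turn in the opposite role, so the per-turn move counts match the definitions of $f_n(p,p)$ and $f_n(p+1,p)$.

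Next I would combine the two bounds using the middle inequality of~\cref{conj:strictmonotonicity} with $m=b=p$, namely $f_n(p,p)<f_n(p+1,p)$ for $n$ large, to obtain
\[
  \omega(G[B]) \ge f_n(p+1,p) > f_n(p,p) \ge \omega(G[A]),
\]
so that Bob wins. I would also note that the symmetric split $2p+1=(p+1)+p$, blocking with $p+1$ and building with $p$, instead yields $\omega(G[A]) \le f_n(p,p+1)$ and $\omega(G[B]) \ge f_n(p,p)$ and invokes the first inequality $f_n(p,p+1)<f_n(p,p)$; thus either half of the conjecture is already enough, and the choice $q=2p+1$ is exactly what makes one of the two ``unit'' comparisons in the conjecture available.

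The main obstacle is a bookkeeping issue about the order of play rather than anything substantial. Alice moves first overall, so in game~(b) the Breaker (Alice) precedes the Maker (Bob); depending on the first-move convention fixed in the definition of $f_n$, this could cost Bob a bounded additive amount in his guaranteed clique size, and an analogous caveat applies to game~(a). This is precisely why I would invoke the final clause of~\cref{conj:strictmonotonicity}, that the gap $f_n(p+1,p)-f_n(p,p)$ tends to infinity: any $O(1)$ loss from first-move or edge-divisibility technicalities is dwarfed by the diverging gap once $n$ is large, so the strict inequality, and hence Bob's win, survives.
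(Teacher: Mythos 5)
Your proposal is correct and follows essentially the same route as the paper: Bob splits his $2p+1$ edges into $p$ Breaker moves against Alice's clique and $p+1$ Maker moves for his own, yielding $\omega(G[A])\le f_n(p,p)$ and $\omega(G[B])\ge f_n(p+1,p)-O(1)$, with the diverging gap in~\cref{conj:strictmonotonicity} absorbing the bounded loss from the first-move/ordering issue (the paper makes this loss explicit as $-p$). The only addition beyond the paper is your remark that the symmetric split would let either inequality of the conjecture suffice.
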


\begin{proof}
    The second player uses $p$ edges as Breaker for the formation of a clique by the first player, and $p+1$ edges to form a large clique itself.
    The first $p$ edges of player $1$ can be omitted in the building process of the second player, and if his own edges are already placed at optimal places, other edges can be put randomly.
    We deduce that $\omega_A \le f_N(p,p)$ and $\omega_B \ge f_N(p+1,p)-p.$
    By assumption, for $N$ sufficiently large this implies $\omega_B>\omega_A$.
\end{proof}

By the results from Beck, this in particular implies that the second player wins $\Clique_{(1,3)}(n)$ provided that $n$ is sufficiently large. In~\cref{sec:biasedclique}, we prove this for all $n\ge 4$.

It is also natural to conjecture the following sharp version for the game.

\begin{conj}
    Provided that $n$ is sufficiently large, the $\Clique_{(p,p+1)}(n)$ is won by player $2.$
\end{conj}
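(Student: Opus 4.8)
The natural plan is to push the Maker--Breaker decomposition behind \cref{thr:game1_biased} and its refinement to the extreme case $q=p+1$. At each round player $2$ would split his $p+1$ edges into a \emph{Breaker} part of $b$ edges, devoted to holding down $\omega_A$, and a \emph{Maker} part of $p+1-b$ edges, devoted to growing his own clique. The $b$ Breaker edges alone guarantee $\omega_A\le f_N(p,b)$ irrespective of the rest of the play, while as Maker with $p+1-b$ edges against the $p$ edges of player $1$ he can force $\omega_B\ge f_N(p+1-b,p)-O(1)$. Thus it would suffice to find some $1\le b\le p$ with $f_N(p+1-b,p)>f_N(p,b)$, and the first step is to try to read this off from a quantitative strengthening of \cref{conj:strictmonotonicity}.

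The obstacle that makes $q=p+1$ qualitatively harder than $q=2p+1$ appears immediately: this inequality is asymptotically tied, and in fact marginally on the wrong side. Substituting Gebauer's bound \cite[Thm.~1.3]{Gebauer12}, $f_N(m,c)=(1+o(1))\,m\log_{c+1}N$, and cancelling $\log N$, the target becomes $(p+1-b)\log(b+1)>p\log(p+1)$. Maximising the left-hand side over $b$ puts $b+1$ near $p/\log p$, where it equals $(1-o_p(1))\,p\log p$, whereas the right-hand side is $p\log(p+1)=(1+o_p(1))\,p\log p$; the two agree to leading order but the left-hand side falls short by a $\Theta(p\log\log p)$ margin for every large $p$. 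The reason is that both one-sided estimates are simultaneously pessimistic for player $2$: the bound $\omega_A\le f_N(p,b)$ ignores that player $1$ must also spend edges interfering with the blue clique, and $\omega_B\ge f_N(p+1-b,p)$ ignores the same interference against player $1$. Any proof must recover this lost correlation, so a purely static decomposition, and indeed \cref{conj:strictmonotonicity} on its own, cannot be enough.

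What makes the conjecture nonetheless plausible, and suggests it should hold with a margin growing in $n$, is the density heuristic. If both players' graphs behaved like random graphs of their respective edge densities $\rho_B=\tfrac{p+1}{2p+1}$ and $\rho_A=\tfrac{p}{2p+1}$, then since $\omega(G(N,\rho))\approx \tfrac{2\log N}{\log(1/\rho)}$ is smooth and strictly increasing in $\rho$, the gap $\omega_B-\omega_A$ would be on the order of $(\rho_B-\rho_A)\cdot\tfrac{\partial\omega}{\partial\rho}=\Theta\!\big(\tfrac{\log N}{p}\big)$, which tends to infinity for fixed $p$. I would therefore aim for a genuinely two-sided (Maker--Maker) strategy realising this advantage: player $2$ plays adaptively, investing his one extra edge per round in building during a long opening phase when player $1$'s red graph poses no concrete large-clique threat, and only reallocating to reactive blocking once a red near-clique first approaches the size he has already secured. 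The hope is that the extra edge compounds into a super-constant head start during the opening that cheap reactive blocking can then protect.

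The main obstacle is making the two-sided interaction rigorous, and I do not expect it to be routine. The density heuristic is only a heuristic: the game graphs are adversarial rather than random, and the claim that blocking is ``cheap'' late is exactly the delicate point, since in Maker--Breaker play the builder keeps manufacturing fresh threats that a Breaker must answer continually. Turning the opening-phase head start into a provable, monotone potential advantage seems to require either sharper control of $f_N(m,b)$ than is currently available or a direct analysis of the two-sided race that has no precedent in the cited literature. Finally, the smallest case $p=1$ is precisely Erd\H{o}s' question whether $\Clique_{(1,2)}(n)$ is a player $2$ win for $n\ge4$, which remains open even with the tools of this paper; so the base case alone is a serious hurdle, and any argument for general $p$ would have to yield a new proof there.
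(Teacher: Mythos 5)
The statement you are addressing is stated in the paper as a \emph{conjecture}, with no proof given; the paper only derives the weaker threshold $q=2p+1$ (conditionally on \cref{conj:strictmonotonicity}) and $q>cp^2$ unconditionally, and explicitly leaves $q=p+1$ open. Your proposal is likewise not a proof: it is an analysis of why the available techniques fail, ending with an unrealised plan. To your credit, the analysis is correct and genuinely informative. Your computation that the static Maker--Breaker split cannot close the case $q=p+1$ is right: with Gebauer's bound the target inequality $(p+1-b)\log(b+1)>p\log(p+1)$ fails for every admissible $b$, falling short by a $\Theta(p\log\log p)$ margin, so the argument that the paper uses for $\Clique_{(p,2p+1)}$ (Breaker with $p$ edges to cap $\omega_A\le f_N(p,p)$, Maker with $p+1$ edges to force $\omega_B\ge f_N(p+1,p)-p$) degenerates exactly at $q=p+1$. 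You also correctly identify that the base case $p=1$ is Erd\H{o}s' still-open question about $\Clique_{(1,2)}(n)$, which the paper also does not resolve (its \cref{thr:main} handles $(1,3)$, not $(1,2)$).

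The genuine gap, then, is simply that no argument is supplied: the ``two-sided race'' strategy you sketch in the third paragraph --- invest the extra edge in building during an opening phase, switch to reactive blocking once a red near-clique threatens --- is a heuristic with no potential function, no threat-counting lemma, and no mechanism for showing that late blocking is cheap; as you yourself note, in Maker--Breaker play the builder continually manufactures fresh threats, and that is precisely where such arguments collapse. The density heuristic ($\omega_B-\omega_A=\Theta(\log N/p)$ if both graphs were random) is suggestive but carries no weight against adversarial play. Since the paper offers no proof either, there is nothing to compare your route against; the honest summary is that you have correctly diagnosed why the conjecture is hard and why the paper's own machinery stops at $q=2p+1$, but you have not proved the statement, and neither does the paper.
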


\section{The biased clique game $\omega_{1,3}$}\label{sec:biasedclique}


There are multiple elementary proofs possible when $n$ is even. We provide the one solution we could modify for the odd case.

\begin{thr}\label{thr:game2_n_even}
    For every $n\ge 4$ even, $\Clique_{(1,3)}(n)$ is won by player $2.$
\end{thr}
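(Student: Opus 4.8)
The plan is to give Bob an explicit pairing strategy that exploits his three-to-one surplus in two ways simultaneously: a \emph{mirroring} component that guarantees a blue clique at least as large as any red clique, and a \emph{surplus} component that upgrades this to a strictly larger blue clique by attaching a dedicated apex vertex. Since $n$ is even, fix a fixed-point-free involution $\sigma$ on $V(K_n)$ (a perfect matching), together with one reserved $\sigma$-pair $\{z, z'\}$ to serve as potential apexes (keeping two in reserve so that at least one avoids Alice's eventual clique); write $\sigma(e) = \sigma(u)\sigma(v)$ for an edge $e = uv$. Bob uses one of his three edges each turn to answer Alice's move $e$ by colouring $\sigma(e)$ blue, and his remaining two edges to colour, blue, edges joining $z$ and $z'$ to the $\sigma$-images of the vertices Alice has already used.

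The mirroring component alone forces $\omega_B \geq \omega_A$: if Alice ever completes a red clique on a vertex set $S$, then, provided no edge of $\sigma(S)$ was pre-empted, the set $\sigma(S)$ carries a blue clique of the same size, since $\sigma$ is a bijection. The purpose of the surplus edges is to guarantee, for Alice's eventual \emph{largest} red clique on a set $S$, that some apex is joined in blue to every vertex of $\sigma(S)$; then $\{z\} \cup \sigma(S)$ (or $\{z'\}\cup\sigma(S)$) is a blue clique of size $\omega_A + 1$, and Bob wins. Because Alice colours only one edge per turn, she enlarges the vertex set touched by her red graph by at most one vertex per round, whereas Bob has two free edges per round to extend the apex connections; a potential-function bookkeeping argument shows the apex stays ahead of Alice's footprint, so the needed blue edges from an apex to $\sigma(S)$ are in place by the time Alice closes any clique.

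The main obstacle, and the reason the choice of strategy is delicate, is \textbf{collision handling}: the mirror edge $\sigma(e)$, or an apex edge Bob wishes to colour, may already be blue (harmless) or, worse, already red. An edge fixed by $\sigma$ coincides with its own mirror, and an apex edge may clash with the mirroring response. I would resolve these by an accounting that treats every such collision as granting Bob a ``free'' move, which he banks towards the apex component; the content of the proof is to verify that collisions can never outpace Bob's surplus, so that both the mirrored clique and all the required apex edges are realised. I expect this bookkeeping — showing that across the whole game Bob's two surplus edges per turn suffice to cover every apex edge incident to $\sigma(S)$ despite collisions — to be the crux.

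Finally, I would record why evenness is used and how the argument adapts: for even $n$ the involution $\sigma$ is genuinely fixed-point-free, so every vertex (including the apexes) is cleanly paired and the mirror map is a perfect symmetry of $K_n$; the remark that this is ``the one solution we could modify for the odd case'' signals that in the odd case one vertex is left unpaired and must be absorbed into the apex bookkeeping by hand, which is exactly the additional work deferred to~\cref{thr:main}. I would therefore keep the even-$n$ argument structured around $\sigma$ and the explicit surplus count, so that the single leftover vertex in the odd case enters as one more bounded correction term.
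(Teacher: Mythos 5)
Your high-level plan (mirror Alice to get $\omega_B\ge\omega_A$, then use surplus edges to gain the strict $+1$) matches the spirit of the paper's proof, but both of your concrete mechanisms have gaps that the paper's strategy is specifically engineered to avoid. First, fixing the involution $\sigma$ in advance is fatal to the mirroring component: Alice can open by colouring matching edges $u\sigma(u)$ red. These are fixed points of the mirror map, so Bob has no reply within the mirroring scheme, and any red clique $S$ containing such a pair satisfies $\sigma(S)\supseteq\{u,\sigma(u)\}$, i.e.\ $\sigma(S)$ already contains a red edge and is \emph{not} a blue clique; the best Bob salvages from it is a blue clique of size $|S|-1$, so even a successful apex only restores a tie. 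You flag this collision as "the crux" and propose to bank free moves, but you never show the banked moves repair the broken mirror clique — and they cannot, because the damage is to the vertex set $\sigma(S)$ itself, not to Bob's edge budget. The paper avoids this by making the pairing \emph{dynamic}: Bob chooses the partner $v_{i+k}$ of each newly touched vertex from the still-unlabeled pool and immediately colours the pair edge $v_iv_{i+k}$ blue, so Alice can never place both members of a pair in a red clique and the mirror set is always disjoint from hers.

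Second, the apex component is under-resourced and attackable. Alice touches up to two new vertices per turn, so keeping \emph{both} $z$ and $z'$ joined in blue to all of $\sigma(S)$ costs up to four edges per turn against a surplus of two; moreover Alice can simply spend a couple of moves colouring $z\sigma(v)$ and $z'\sigma(v)$ red for a vertex $v$ she intends to use, severing both apexes from $\sigma(S)$ at a cost that does not prevent her from completing a clique. The paper gets the $+1$ without any dedicated apex, via a counting argument: Bob also colours the cross edges $v_iv_{j+k}$ and $v_jv_{i+k}$, and one checks that at most $r-1$ edges of the bipartite graph between $V_1=\{v_1,\dots,v_r\}$ and $V_2=\sigma(V_1)$ can end up red (essentially one per new vertex Alice introduces into her clique), so some $v_i\in V_1$ is completely joined in blue to $V_2$ and $V_2\cup\{v_i\}$ is a blue clique of order $r+1$. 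This is robust precisely because it is a global count over how cliques grow, rather than a defence of $O(n)$ designated edges. To rescue your write-up you would need to replace the fixed involution by an adaptive pairing and replace the apex by such a counting argument — at which point you have reconstructed the paper's proof.
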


\begin{proof}
    We will consider a turn as one move (colouring an edge) by Alice and (at most) $3$ moves of Bob.
    Since Bob can also play some of the remaining edges randomly, the win for Bob will be kept, so we present the strategy for Bob in this simpler way.

    The $n=2k$ vertices are initially unlabeled vertices.
    During the game, a vertex is labeled as soon as at least one of its incident edges is coloured, or when there are only two vertices without coloured incident edges, in which case both are labeled.

    \textbf{Initial play}
    Here we assume that at the beginning of the turn, at least $4$ vertices are unlabeled.
    The $2m$ labeled vertices will have indices which appear in pairs $(v_i, v_{i+k})$, where $1\le i \le m$.
    The edges $v_i v_{i+k}$, will be coloured by Bob.
    If an edge $v_iv_j$ is coloured by Alice, the symmetric edge $v_{i+k}v_{j+k}$ will have been coloured by Bob.
    Here indices can be considered modulo $2k$ throughout.

    Depending on the edge Alice chooses, Bob reacts as follows.
    
    \begin{itemize}
        \item If Alice colours an edge between two unlabeled vertices, we will label them as $v_i, v_j$ (where $i=m+1, j=m+2$).
    Bob picks two unlabeled vertices, calls them $v_{i+k},v_{j+k}$,
    and colours the edges $v_{i}v_{i+k}, v_{i+k}v_{j+k},v_{j}v_{j+k}$.

        \item If Alice colours an edge whose endvertices contain a labeled one, $v_i$, and one unlabeled one, which we call $v_{m+1}.$ Bob takes an other unlabeled vertex, $v_{m+1+k},$ and colours
    $v_{i+k}v_{m+1+k}$ and $v_{m+1}v_{m+1+k}$.
        \item If Alice colours an edge $v_i v_j$ whose end vertices are already labeled, Bob colours $v_{i}v_{j+k}, v_{i+k}v_{j+k}$ and $v_{j}v_{i+k}$ (some of these edges may have been coloured before by Bob if the vertices $v_i, v_j$ were labeled in the same turn).
    \end{itemize}

    If two unlabeled vertices remain at the end of the turn, we label them by $x$ and $y$, and we go to the final play.

    \textbf{Finishing play}

\begin{itemize}
        \item If Alice colours $xy$, Bob colours no edges.

        \item If Alice colours $xv_i$, then Bob colours $xv_{i+k},yv_i$ and $yv_{i+k}$.

        \item If Alice colours $yv_i$, then Bob colours $xv_i,xv_{i+k}$ and $yv_{i+k}$.

        \item If Alice colours an edge $v_i v_j$ whose end vertices are already labeled, then Bob responds as during the initial play; that is, he colours $v_{i}v_{j+k}, v_{i+k}v_{j+k}$ and $v_{j}v_{i+k}$ (some of these edges may have been coloured before by Bob if the vertices $v_i, v_j$ were labeled in the same turn).
\end{itemize}

Having sketched a winning strategy for Bob, we prove that Bob wins.

Consider a largest clique induced by all vertices different from $x$ and $y$ for which all edges are coloured by Alice.
Up to shifting and relabeling opposite vertices,
we denote it $v_1v_2 \ldots v_r.$ This clique has order $r \ge 2.$
In that case, by the strategy of Bob, he coloured the clique $v_{k+1}v_{k+2} \ldots v_{k+r}$ containing all symmetric edges.

There are at most $r-1$ edges among $G[v_1,v_2, \ldots ,v_r]$ which have been coloured by Alice at a moment at least one of its endvertices was unlabeled.
For each of these edges $v_iv_j$, the clique $G[v_i,v_j,v_{i+k},v_{j+k}] \cong K_4$ contains two edges coloured by Alice.
If $v_iv_j$ was already labeled before Alice coloured that edge, it is the only edge within $G[v_i,v_j,v_{i+k},v_{j+k}]$ which is coloured by her.
This implies that the bipartite graph spanned by the partition classes $V_1=\{v_1,v_2, \ldots ,v_r\}$ and $V_2=\{v_{k+1},v_{k+2}, \ldots ,v_{k+r}\}$ has at most $r-1$ blue edges.
This implies that there is a vertex $v_i$ in $V_1$ such that $G[v_i,V_2]$ is fully coloured by Bob. That is, $G[V_2 \cup \{v_i\}]$ is a clique of order $r+1$ fully coloured by Bob.

By Bob's strategy, Alice has no triangle containing the edge $xy$ that is fully coloured.
If Alice has a large fully coloured clique containing $x$, then it must be of the form $V_1 \cup \{x\}$; 
analogously, if it contains $y$, it must be of the form $V_1 \cup \{y\}$.
But then the clique spanned by $V_2 \cup \{v_i, y\}$ (resp. $V_2 \cup \{v_i, x\}$) is fully coloured by Bob. 
\end{proof}

\begin{thr}\label{thr:game2_n_odd}
    For every $n\ge 5$ be odd, $\Clique_{(1,3)}(n)$ is won by player $2.$
\end{thr}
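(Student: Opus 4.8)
The plan is to mimic the antipodal mirror strategy from \cref{thr:game2_n_even}, using three special vertices to soak up the parity of $n=2k+1$. I would set aside three vertices $x,y,z$ and split the remaining $n-3$ vertices (an even number) into antipodal pairs $v_i\leftrightarrow v_{i+k'}$ with $k'=\frac{n-3}{2}$, played exactly as in the even case: each red edge $v_iv_j$ is answered by the blue mirror edge $v_{i+k'}v_{j+k'}$, and the diagonals $v_iv_{i+k'}$ are coloured blue. Bob keeps the triangle $xy,xz,yz$ entirely blue throughout. The one genuinely new rule concerns an edge $sv_i$ from a special vertex $s\in\{x,y,z\}$ to a paired vertex: Bob answers it by colouring blue the two edges joining the other two specials to the antipode $v_{i+k'}$. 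This costs only two of Bob's three edges, and it guarantees that the two non-$s$ specials become blue-joined to the antipodal image of whatever clique Alice routes through $s$.

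I would then analyse a largest red clique $C$. Since $xy,xz,yz$ are blue, $C$ contains at most one special vertex. If $C$ avoids $\{x,y,z\}$, then $C$ lies in the paired part and the even-case mirror-and-counting argument applies unchanged: the mirror of $C$ is a blue clique $V_2$ of order $|C|$, and the bipartite counting argument yields a paired vertex blue-joined to all of $V_2$, so Bob holds a blue clique of order $|C|+1$. If $C=\{s\}\cup V_1$ uses exactly one special vertex $s$, let $V_2=\{v_{i+k'}:v_i\in V_1\}$ be the (blue) mirror of $V_1$; by the special-response rule the two other specials $t,u$ are both blue-joined to all of $V_2$, and $tu$ is blue, so $V_2\cup\{t,u\}$ is a blue clique of order $|V_1|+2=|C|+1$. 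Either way Bob obtains a strictly larger clique, which is exactly what a player-$2$ win demands when $q>p$.

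The main obstacle is the scheduling needed to make $xy,xz,yz$ blue before Alice can claim any of them, while never falling behind on the mirror responses within Bob's budget of three edges per turn. The cleanest route I see is to fix $x,y,z$ after Alice's opening move --- choosing them so that her first edge is not internal to the triangle --- and to have Bob spend his first turn colouring $xy,xz,yz$ blue, carrying a bounded ``mirror debt'' that he later clears using the spare edge present on every turn whose prescribed response costs fewer than three edges (the one-new-vertex move in the even case, and here also every special-to-paired move, which costs only two). I would need to confirm that these spare moves always arrive before they are needed --- that Alice cannot force a long run of budget-saturating moves while a triangle edge or an unrepaid mirror is still outstanding --- and to settle the smallest odd orders, where $n-3$ is too small for the counting step to have room, by the direct computer search of \cref{sec:pc_code}.
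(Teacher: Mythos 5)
Your overall philosophy --- antipodal mirroring on an even part plus a fully blue triangle on three special vertices that absorbs the odd parity --- is the same as the paper's, and your endgame counting (a red clique meets $\{x,y,z\}$ in at most one vertex since the triangle is blue, and the blue mirror $V_2$ together with the two unused specials beats $\{s\}\cup V_1$ by one) matches the paper's Cases~1 and~2. The decisive difference is \emph{when} the special triple is chosen, and this is where your argument has a genuine gap. You fix $x,y,z$ at the start and spend Bob's entire first turn on the triangle, so Alice's opening edge is left unmirrored; you then propose to carry this ``mirror debt'' and repay it from spare capacity on later turns, but you explicitly leave unverified that the spare turns arrive in time. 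They need not: Alice can repeatedly colour edges between two fresh unlabeled vertices, each of which demands the full three-edge mirroring response, so the debt is never amortised and at every moment some red edge has an unassigned or unprotected antipodal pair; Alice can then claim a crossing diagonal of that unrepaired pair, which destroys the ``at most $r-1$ red edges between $V_1$ and $V_2$'' count that your $|C|+1$ conclusion (inherited from \cref{thr:game2_n_even}) relies on. The paper sidesteps this entirely by taking the three specials to be the \emph{last} three unlabeled vertices: the triangle $abc$ is only claimed on the turn in which Alice first touches the residual triple (or, in the $n=2k+5$ subcase, colours an edge $xy$ inside the residual quintuple), at which point Bob has a fresh budget of three edges --- and when Alice's move is $ab$ itself, only two triangle edges remain to claim --- so no turn ever exceeds the budget and no debt is ever incurred.

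A second, smaller omission: since your paired part has $n-3$ (even) vertices, running the even-case strategy on it verbatim also produces its own pair of leftover ``finishing'' vertices, and you never analyse a red clique containing one special \emph{and} one finishing vertex, which would force Bob to find a clique larger by one than your $V_2\cup\{t,u\}$ provides. The paper's formulation avoids this because all parity leftovers are concentrated in a single residual triple or quintuple. To repair your version you would need either a genuine debt-scheduling lemma or a restructuring of the choice of specials along the paper's lines; deferring the small odd orders to the computer search does not address either issue, since both arise for all large $n$.
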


\begin{proof}
    The setup and initial play proceed as in~\cref{thr:game2_n_even}, up to the point where either three unlabeled vertices remain and Alice colours an edge incident to at least one of them, or five unlabeled vertices remain and Alice begins colouring an edge between two of these.
    The labeled vertices (possibly after relabeling), can be $L=\{v_i \mid 1 \le i \le 2k\}$ where $v_i$ and $v_{i+k}$ form a pair of opposite vertices. Here $n=2k+3$ or $n=2k+5$.

    When $n=2k+3$, we call the the three remaining unlabeled vertices $a,b,c.$
    
    When $n=2k+5$, we name the end vertices of the edge coloured by Alice $x,y$. We label the other three by $a,b,c.$
    
    We present the final play in each of the following three situations.

    \underline{Case 1: $n=2k+3$ and Alice colours $av_\ell$}

    Bob colours the edges of $abc$.

    From now on;

    \begin{itemize}
    \item If Alice colours an edge in $\{a,b,c\} \times \{v_\ell, v_{\ell +k}  \}$, then Bob can colour edges among $\{a,b,c\} \times \{v_\ell, v_{\ell +k}\}$ (but does not even have to).
    \item If Alice colours an edge $v_i v_j \in G[L]$, Bob colours $v_{i}v_{j+k}, v_{i+k}v_{j+k}$ and $v_{j}v_{i+k}$ (some of these edges may have been coloured before by Bob if the vertices $v_i, v_j$ were labeled in the same turn).
    \item If Alice colours an edge in $dv_i$ for some $d \in \{a,b,c\}$, then Bob colours $av_{i+k}, bv_{i+k}$ and $cv_{i+k}$. If those three were already coloured, Bob colours the uncoloured edge among $av_{i}, bv_{i}$ and $cv_{i}$.
    \end{itemize}

    If a largest clique of Alice is contained in $G[L],$ Bob has a larger clique within $G[L],$ as shown before in the proof of~\cref{thr:game2_n_even}.

    If the largest clique contains one vertex from $\{a,b,c\}$ and w.l.o.g. $V_1=\{v_1,v_2, \ldots ,v_r\}$ ($r$ vertices from $L$.

    Let $V_2=\{v_{k+1},v_{k+2}, \ldots ,v_{k+r}\}$.
    
    Bob has a clique spanned by the vertices $V_2 \setminus \{v_\ell, v_{\ell +k}\} \cup \{a,b,c\}$, so its clique number is at least $r+2.$

    \underline{Case 2: $n=2k+3$ and Alice colours $ab$}
    
    Bob reacts by colouring the other two edges of $abc$.
    
    From now on;

    \begin{itemize}
    \item If Alice colours an edge $av_i$, then Bob colours
    $bv_i, bv_{i+k}$ and $cv_{i+k}.$
    Symmetrically, if Alice colours an edge $bv_i$, then Bob colours $av_i, av_{i+k}$ and $ cv_{i+k}.$
    \item If Alice colours an edge in $cv_i$, then Bob colours
    $ av_{i+k}, bv_{i+k}$ and $ cv_{i+k}.$
    \item If Alice colours an edge $v_i v_j \in G[L]$, Bob colours $v_{i}v_{j+k}, v_{i+k}v_{j+k}$ and $v_{j}v_{i+k}$ (some of these edges may have been coloured before by Bob if the vertices $v_i, v_j$ were labeled in the same turn).
    \end{itemize}

If a largest clique of Alice is contained in $G[L],$ Bob wins as before.

If a largest clique of Alice is of the form $\{a,v_1,v_2, \ldots ,v_r\}$, then Bob has a clique spanned by $\{b,c,v_{k+1},v_{k+2}, \ldots ,v_{k+r}\}$.
Similarly, $\{b,v_1,v_2, \ldots ,v_r\}$ or $\{c,v_1,v_2, \ldots ,v_r\}$ would be a smaller clique than $\{a,c,v_{k+1},v_{k+2}, \ldots ,v_{k+r}\}$.

    \underline{Case 3: $n=2k+5$ and Alice colours $xy$}

    Bob reacts by colouring the edges of $abc$.
    
    From now on;

    \begin{itemize}
    \item If Alice colours an edge in $av_i,bv_i$ or $cv_i$, then Bob colours
    $ av_{i+k}, bv_{i+k}$ and $ cv_{i+k}.$
    \item If Alice colours $xv_i$ or $yv_i,$ Bob colours the three remaining edges in $\{x,y\} \times \{v_i,v_{i+k}\}.$
    \item If Alice colours w.l.o.g. $ax$, Bob colours $ay,by$ and $cy.$
    (the $6$ edges are all equivalent)
    If later, Alice colours $bx$ or $cx$, Bob colours the other edge among those two. 
    \item If Alice colours an edge $v_i v_j \in G[L]$, Bob colours $v_{i}v_{j+k}, v_{i+k}v_{j+k}$ and $v_{j}v_{i+k}$ (some of these edges may have been coloured before by Bob if the vertices $v_i, v_j$ were labeled in the same turn).
    \end{itemize}

    If a maximum clique from Alice contains at most one vertex from $\{a,b,c,x,y\},$ then Bob has a larger clique as before.

    The only clique of Alice containing $xy$ is $xy$ itself.

    If $\{a,x,v_1,v_2, \ldots ,v_r\}$ is a maximum clique from Alice, then $\{b,c,y,v_{k+1},v_{k+2}, \ldots ,v_{k+r}\}$ is a monochromatic clique coloured by Bob.
    In all cases, the maximum clique coloured by Bob is strictly larger than the maximum clique coloured by Alice.    
\end{proof}

\section{Particular behaviour for some variants and extensions
}\label{sec:part_beh}

\subsection{Smoothening cliques; the games on Colex graphs}\label{subsec:ColexGraphs}

Playing the Clique game on $K_{n+1}$ instead of $K_n$ implies that there are many more edges to colour.
To have a smoother behaviour, we can consider Colex graphs. 
Let $n$ be the smallest integer for which $\binom n2 \ge m.$
The Colex graphs $G=\C(m)$ of size $m$ can be defined (up to isomorphism) by choosing $V(G)=\{1,2,\ldots, n\}$ and letting $E(G)$ consist of the first $m$ pairs of vertices in colexicographic order (here $(a,b)<(c,d)$ if $b<d$ or $a<c$ and $b=d$). Equivalently, it is the clique $K_n$ with the edges of a star of size $\binom n2 - m$ removed.

\begin{figure}[h]
  \centering
  \begin{tikzpicture}
    \foreach \t in {72,144,216,288,360}{
    \draw (\t:3)--(\t+144:3);
    }
    \foreach \t in {72,216,288,360}{
    \draw (\t:3)--(\t+72:3);
    }
    \draw[line width=0.75mm, blue] (-72:3)--(72:3)--(0:3)--cycle;
    \foreach \t in {72,144,216,288,360}{
    \draw[fill] (\t:3) circle (0.15);
    }
  \end{tikzpicture}\quad
  \begin{tikzpicture}
    \foreach \t in {72,144,216,288,360}{
    \draw (\t:3)--(\t+144:3);
    \draw (\t:3)--(\t+72:3);
    \draw[fill] (\t:3) circle (0.15);
    }
    \draw[line width=0.75mm, blue] (-144:3)--(144:3);
    \draw (144:3)--(-4.5,0) ;
    \draw (-144:3)--(-4.5,0) ;
     \draw[fill] (-4.5,0) circle (0.15);
     \draw[fill] (144:3) circle (0.15);
     \draw[fill] (-144:3) circle (0.15);
  \end{tikzpicture}
  \caption{The Colex graphs $\C(9)$ and $\C(12)$.}
  \label{fig:C(12)}
\end{figure}

A game can be played on any Colex graph $\C(m).$ If the addition of one edge, when going from $\C(m)$ to $\C(m+1),$ would have a small impact in the sense that the outcome $(a,b)$ differs in at most one entry, then by Ramsey theory (or noting the outcomes are not bounded) there would be infinitely many graphs $\C(m)$ for which the first player can win.

To have even further information, we consider also the Colex variant of the Clique game. Here the players try to form a Colex graph of larger size. 
Observe that when there is a tie in the Colex game, there is also a tie in the Clique game (and even the exact outcome can be deduced).
We verified the outcome of the games for some Colex graphs, which are summarised in~\cref{tab:Results_games_onColexGraphs}, and observed that each game exhibits some surprising behaviour.
Analogous to the other games, the outcome of the Colex game is denoted with $\out_{\col}(G)$.

\begin{table}[h]
  \centering
  \begin{tabular}{|c|c|c|c|cc|}
  \hline
  m & $G=\C(m)$ & $\out_{\col}(G)$ & $\out_{\Delta}(G)$& $s_{\VC}(G)$&  $\out_{\VC}(G)$\\
  \hline 
  1 & $K_2$ & \textit{(1,0)} & \textit{(1,0)} & \textit{2} & (2,0) \\
  2 & $P_3$ & (1,1) & (1,1) & 0  &(1,1) \\
  3 & $K_3$& \textit{(2,1)} & \textit{(2,1)} & 1 & (1,0) \\
4 & & (2,2) & (2,2) & 1 & (2,1)\\
5 & & \textit{(2,1)} & \textit{(2,1)} & \textit{2}  &(2,0) \\
 6 & $K_4$& (2,2) & (2,2) & 0  &(2,2) \\
7 &  & (2,2) & \textit{(3,2)} & 1  &(3,2) \\
8 & & \textcolor{red}{(2,2)} & \textbf{\textcolor{green}{\textit{(3,2)}}} & 1  &(2,1)\\
9 & $K_5^-$ & \textcolor{red}{\textit{(4,2)}} & \textit{(3,2)} & 1 & (2,1)\\
 10 & $K_5$& \textcolor{red}{(4,4)} & (3,3) & 0 & (1,1) \\
11 & & (4,4) & (3,3) & \textit{2} & (3,1)\\
12 & & (4,4) & (3,3) & \textbf{\textcolor{red}{1}} & (2,1)\\
13 & & (4,4) & (3,3) & 1 & (3,2)\\
14 & $K_6^-$ & \textbf{\textcolor{orange}{(4,4)}}& (3,3) & 0 & (3,3)\\
 15 & $K_6$& \textbf{\textcolor{orange}{(5,5)}} & (4,4) & 0 & (3,3)\\
 16 & & (5,5) & (4,4) & 1 & (4,3)\\
17 & & (5,5) & \textbf{\textcolor{red}{\textit{(4,3)}}} & 1 & (3,2)\\
18 & & (5,5) & (4,4) & 0 & (3,3) \\
19 & & (5,5) & (4,4)  & 1 & (3,2) \\
20 & & (5,5) & (4,4)  & 0 & (2,2) \\
21 & $K_7$& (5,5) &  (4,4) & 1 & (2,1)\\
22 & & (5,5) & (4,4) & 1 & (2,1) \\
23 & &(5,5) & (4,4) & 1 & (3,2) \\
24 & & (5,5) & (4,4)& 0 & (3,3) \\
28 & $K_8$& (5,5) & (5,5) & 0 & (4,4)\\

\hline
\end{tabular}
  \caption{Outcomes of the three symmetric games on Colex graphs with small size.}
  \label{tab:Results_games_onColexGraphs}
\end{table}

\begin{itemize}
  \item For the Colex game, the increase of outcomes can occur in at least two ways. Increasing the size from $\C(8)$ to $\C(10),$ implies that initially (for $m=9$) the first player can increase its score $a$, but next (at $m=10$) also the second player can do so. Note that $\C(9)$ can be considered as two $K_4$s overlapping in a triangle. The first player, Alice, can colour two of the three edges within this triangle and can use this advantage to create a $K_3$ in her colour.
  On the other hand, the extra edge going from $\C(14)$ to $\C(15)$ implies that both players can increase their number of points. The extra edge implies that the playing field is enlarged for both players.
  This behaviour is expected to be typical for transitions in large instances.
  \item For the Star (maximum degree) game, we observe that Alice wins on $\C(7), \C(8)$ and $\C(9)$. The outcome changes for $16 \le m \le 18.$ When $m=\binom{n-1}{2}+2,$ there is an edge that is the only one in its orbit (e.g. the thick blue one in $\C(12)$ in~\cref{fig:C(12)}).
  \item In the vertex capturing game, the first player can win on $\C(12)$ despite the order and the size of $\C(12)$ being even. 
\end{itemize}

The behaviour sketched here also suggests that one cannot easily prove that $s_{\omega}(n)=1$ holds in only $o(x)$ many cases for $3 \le n \le x$.

\subsection{The Clique game in less symmetric situations}\label{subsec:CliqueGameOnIrregular}

The question if irregular graphs behave different than cliques was asked in \cite[Ques.~1]{MS24}.

\begin{q}\label{ques:increasing_omega}
  Do there exist graphs $H$ with arbitrarily large clique number such that
  $s_{\omega}(H)=1$?
\end{q}

The main difference between irregular graphs and cliques is that some edges are more important to claim than others in the start, as was the case for $K_5^-.$
The game played on the graph join $K_3 +2tK_1$ ($K_{3,2t}$ with $3$ edges added between the vertices of degree $2t$) can result in many (t) triangles against none. The latter is true because there are $3$ edges belonging to $2t+1$ triangles and the others to only two, so having the advantage to pick two out of three edges with larger impact ($K_3$-multiplicity) makes the difference.

If there exists a graph $H$ such that Alice wins the clique game on $H$ whenever a red $K_3$ is already present before her move, with outcome $(a,a-1)$ where $a\ge 4,$ then there would be a chance to answer~\cref{ques:increasing_omega} positively for outcome $(a,a-1)$ as well.
For this, one could extend $K_5^-$ by attaching copies $H_i$ of the graph $H$, such that $H_i \cap K_5^- \cong K_3 $ for every $i$, and $V(H_i) \cap V(H_j)\subset V(K_5^-)$ for $i \not=j.$

Nevertheless, if Alice plays the clique game on a $K_8$, where she starts the game after an advantage of a $K_3$ in her colour is already given, the game still ends with $K_3$ against $K_3$ (outcome $(3,3)$). Starting with a claw, we even observe the following non-smooth jump.

\begin{obs}
  If Alice starts with a star (claw $K_{1,3}$) $S_4$ in her colour as advantage (here the claw spans only maximum degree vertices of the Colex graph) before colouring her first chosen edge, the outcome of the modified Clique game is $(4,4)$ on $K_8$ and $(3,3)$ when played on $\C(26)$ or $\C(27).$
\end{obs}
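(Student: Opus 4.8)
The plan is to treat this exactly as the table entries of \cref{sec:part_beh} are treated: the modified game is a finite perfect-information game whose starting position is simply $K_8$ (respectively $\C(26)$, $\C(27)$) with a fixed claw $K_{1,3}$ pre-coloured red, so by Zermelo's theorem its optimal outcome is well defined and is obtained by backward induction. Concretely I would feed these three starting positions into the minimax search of \cref{sec:pc_code}, pruning with the automorphism group of the \emph{pre-coloured} graph: on $K_8$ this is the stabiliser of the claw, and on $\C(26),\C(27)$ it is the intersection of that stabiliser with the automorphism group of the Colex graph (which is why the statement insists that the claw span maximum-degree vertices — it fixes the position up to symmetry). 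This is the direct analogue of how \cref{tab:Results_games_onColexGraphs} was produced and is the decisive verification of the two values $(4,4)$ versus $(3,3)$.

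To make the claim humanly checkable, and to expose \emph{why} the jump happens, I would split each outcome into a lower and an upper bound. For the lower bounds on $a$, a red $K_3$ is immediate everywhere: Alice's first free move claims one of the three edges joining the leaves of her claw. The real content is that on $K_8$ she can force a red $K_4$, and here I would give an explicit Maker strategy in which Alice grows the star at its centre $c$ — after colouring a fresh edge $cv_5$ she owns a red $K_{1,4}$ and needs only a red triangle among its four leaves. The heuristic driving the jump is a counting one: of the $25$ edges remaining on $K_8$ Alice colours $13$ against Bob's $12$, whereas on $\C(27)$ and $\C(26)$ the $24$ and $23$ remaining edges give her no such surplus. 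For Bob's matching values I would not use naive strategy stealing (Alice's handicap breaks the symmetry) but a pairing/shadowing strategy in the spirit of \cref{thr:game2_n_even}, pairing opposite vertices so that Bob turns any red $K_r$ into a blue clique of order at least $r$.

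The upper bounds are the crux, and the $\C(26),\C(27)$ case is precisely the non-smooth jump the observation is meant to illustrate. Here I would exhibit a Breaker strategy for Bob that forbids a red $K_4$ once one or two edges at the non-claw vertex are deleted: the point to make precise is that those missing edges remove exactly the slack that let Alice convert her red star into a $K_4$ on $K_8$, so that every attempted extension of the claw leaves a leaf-pair edge Bob can pre-empt. The remaining bounds are comparatively routine — Bob prevents a red $K_5$ and Alice a blue $K_5$ on $K_8$, and Alice prevents a blue $K_4$ on the two Colex graphs. I expect the matching upper bound to be the main obstacle: showing by hand that deleting one or two edges disjoint from the claw drops Alice's guaranteed clique from $4$ to $3$ is a genuinely case-heavy Breaker analysis rather than a clean counting or symmetry argument, which is exactly why the exhaustive search of \cref{sec:pc_code} remains the authoritative proof.
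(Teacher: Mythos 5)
Your primary approach---feeding the pre-coloured claw as a start-graph $S$ into the specialised generator and backward-induction solver of \cref{sec:pc_code}---is exactly how the paper establishes this observation, which is stated purely as a computational result with no human-readable proof. The supplementary Maker/Breaker sketches and edge-counting heuristics you add are not load-bearing (and, as you concede, the upper bounds would require a case-heavy analysis you do not carry out), so the exhaustive search remains the proof in both your write-up and the paper's.
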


In particular, related to~\cref{ques:increasing_omega}, we cannot exclude even that for every graph $G$ satisfying $\omega(G) \ge 5$, $\Clique(G)$ is a player $2$ win.

\section{Details about computer program}\label{sec:pc_code}

In \cite{github-repository}, we included two independent implementations for the computer-aided verifications, one in C and one in Sage. The Sage implementation has simpler syntax, and can be found in the files with name \textit{EP\&BMI\_Games}.
This section details the strategies used in the C program(s) to solve the games.  
The publicly available code also serves as a tool for other researchers. 
The code is publicly available and may serve as a tool for further investigation.
\subsection{General strategy}

The general principle of the program for solving any of the games is divided into two phases. In the first phase, all final configurations are generated. These configurations are independent of the specific game being played and provide the basis for scoring all games.
In the second phase, the games are played in reverse. 
Once we know the outcomes for the (parent) graphs with $r+1$ coloured edges, we consider all children by uncolouring one edge (of the correct colour when playing the game backwards) and remember the optimal score among all parent graphs that lead to this child.
This process generates all scores of the graphs with $r$ coloured edges. Continuing the process until the graph contains no coloured edges (or as many as in the initial configuration) yields the final outcome of the game.

\subsubsection{Strategies for efficiency in stage one}

Apart from the data structures (which we will discuss in detail in~\cref{sec:datastructures}) a few techniques are applied to generate the final configurations in an efficient manner. Firstly if the game is played on a clique
, we can use a peer-reviewed graph generator from Nauty~\cite{Nauty}. Observe that if we only observe final configurations in this game on $n$ vertices, these have $\ceilfrac{n(n-1)}{4}$ red edges. 
We can thus generate all non-isomorphic graphs of order $n$ with $\ceilfrac{n(n-1)}{4}$ edges and assume all edges that are not in this graph to be coloured blue.

This is not the case when either the game is not played on a clique, or player $1$ can start by selecting multiple edges, $E(S)$, on their first turn. For those games a specialised generator was constructed (available in the software package at \cite{github-repository}, in the file \textit{gen-colex.c}). This generator starts from a specific start-graph $S$, representing the edges selected by player $1$ on the first turn, and a base-graph $B$, representing the underlying graph on which the game is played. Here we assume that $E(S) \subseteq E(B)$. Then $\ceilfrac{\binom n2 - \abs{E(S)}}{2}$ edges are placed incrementally, as seen in the function \textit{generate\_final\_configuration}. Starting with $S$, we use Nauty to compute the edge-orbits of the graph, the exact procedure to do this correctly will be explained in detail in \cref{sec:canonical-labelling}. Then for each orbit only the edge with the lowest index in $S$ is added, as seen in the function \textit{compute\_possible\_edges}, and the index of this edge is stored with the graph. The reader can verify this in the function \textit{determine\_canonical\_labelling}. Simultaneously we use Nauty to determine the canonical labelling (for details see \cref{sec:canonical-labelling}) of this resulting graph and temporarily store this to verify that no isomorphic copies are generated. When all non-isomorphic graphs are generated, the process is repeated on all of these graphs until the required number of edges is reached. An important detail is that by storing the index of the last edge added, we can reduce the amount of isomorphic copies generated. To do this, we allow new edges to be added only with an index higher than the last, ensuring a unique way to add a specific set of edges. This trick is seen in the function \textit{compute\_possible\_edges}. This concludes the basic idea of the generator.

\subsubsection{Strategies for efficiency in stage two}

In the second stage, the game is played in reverse. This phase is performed in \textit{erdos-solver.c}. To do this, we need all possible final configurations and their scoring (the scoring only needs to adhere to the principle that every game where player $1$ wins has a higher value than every game where player $2$ wins, though more nuanced scoring can also be used), the base-graph $B$ and start-graph $S$. An iterative algorithm similar to the generator is used, seen by the main structure of the function \textit{find\_best\_game}. This time however every edge of each colour is removed in turn. In a single iteration we start from a list of non-isomorphic graphs, with sets of edges coloured, either in blue or red, and a set of non-coloured edges. Then for each graph every edge in the colour of the player that is currently playing and that is not in $S$ is removed to generate a graph that represents a possible previous state of the game. Using the techniques described in~\cref{sec:canonical-labelling} a canonical representation of the new graph is computed to compare with the graphs already generated in this layer. This is implemented in the function \textit{determine\_canonical\_labelling}. Each time a graph is generated that is an isomorphic copy of one that was already generated, only one copy of the graph is kept (with the optimal score). The optimal score is the maximum in the turn of player $1$ and the minimum in the turn of player $2$. After all possible previous game states have been generated, the process repeats. This iterating stops when only $S$ is found. Then $S$ contains the score that represents the best game player $1$ could play if player $2$ wants to minimise the score of player $1$.

\subsubsection{Datastructures for memory efficiency}
\label{sec:datastructures}

To maximise the memory efficiency, the graphs are represented as bitsets. As the edges of the graphs are the most important for our purposes we encode a graph $G$ as follows: label all vertices from $0$ to $|V(G)|-1$, then label the edge in colexicographic order; $(1,0)$ as $0$, $(2,0)$ as $1$, $(2,1)$ as $2$, $(3,0)$ as $3$, and so on. We now represent each edge with two bits: $00$ represents that the edge is not present in the base-graph and can thus not be coloured, $01$ represents that the edge is not coloured but is present in the base graph and could be coloured, $11$ represents that the edge is present in the base graph, is coloured and has the colour of player $1$ and finally $10$ represents that the edge is present in the base graph, is coloured and has the colour of player $2$. Thus all information of the graph of order $n$ is encoded in a bitset of $n(n-1)$ bits.

\subsubsection{Determining canonical labelling of a coloured graph}
\label{sec:canonical-labelling}

To determine the canonical labelling and the edge orbits of a coloured graph we
will use Nauty (version 2.8.6) \cite{Nauty}. This library provides a functionality that, given the adjacency matrix of an undirected simple graph with given vertex colour classes, efficiently computes a canonical labelling of the graph and refines the vertex orbits based on the graph structure. The reader can see that the encoding presented in the previous~\cref{sec:datastructures} does not comply with this data structure. That is why we expand the graph encoding as follows: Every vertex in the original graph, labelled from $0$ to $|V(G)|-1$, is present in the expansion. Then $\frac{n(n-1)}{2}$ new vertices are inserted, each representing a possible edge in the original encoding. These are ordered in colexicographic order as described above and labelled from $|V(G)|$ up until $|V(G)|+\frac{n(n-1)}{2}-1$. Lastly four additional vertices are inserted, each representing one of the four possible states of the edge. The vertices representing vertices in the original graph are connected to the vertices representing their respective incident edges in the original graph. The vertices representing edges in the original graph are each connected to the correct vertex representing their state in the original graph encoding. Then to ensure vertices are only mapped to vertices, edge to edges and the states to themselves they are each put into their own colour group. The reader can see this expansion in both the generator and solver in the function \textit{generate\_expanded\_graph}. The library is called to canonically label this expanded graph which also returns the vertex orbits for all vertices. We then use the vertex relabelling to construct the bitset of the canonical encoding and store this to compare with other graphs. This is again seen in both implementations in the function \textit{determine\_canonical\_labelling}. Additionally, the vertex orbits in the expanded graph of the vertices that represent the edges in the original graph give us the edge-orbits in the original graph. This last part is present in the function \textit{compute\_possible\_edges} in the file \textit{gen-colex.c}.

\subsection{Proof of program correctness}


In this subsection, we describe how the program generates all intermediate colourings in accordance with Zermelo’s theorem (readers familiar with its proof may skip this section).

To prove that the program is correct, we first show that the principle of the generator is sound. We do so by induction on the number of red edges in the graph.

It is evident that the starting vertex is the only non-isomorphic copy of itself. Assume we begin with all non-isomorphic graphs with \( k \) edges, created using edges with the minimal index. To prove that we then generate all non-isomorphic graphs with \( k+1 \) edges, created using edges with the minimal index, we assume the contrary: there exists a graph \( G \) that is not generated by the algorithm but has \( k+1 \) edges, all created with edges of minimal index. 

Now, there must exist a subgraph \( G' \) of \( G \) that does not contain the edge \( e_h \) with the highest index in \( G \). There are two possibilities for \( G' \): either it was part of the set of graphs generated at the start of this iteration, or it was not. 

In the first case, \( G' \) would have been extended with each edge in its orbit, using the minimal index. Thus, either the edge was not minimal in its orbit, which contradicts the choice of the edge removed in \( G \) to create \( G' \), or the edge was minimal, but there was another graph isomorphic to \( G \) that had already been constructed from another graph \( G'' \) in the starting set. This would contradict the existence of \( G \), as the graph generated by adding this other edge to \( G'' \) would have resulted in a graph with edges of a lower index.

If $G'$ was not in the starting set, then there is a graph $G''$ that is isomorphic to $G'$ with edges with a lower index than $G'$. Since $G''$ has only $k$ edges and is isomorphic to $G'$, there exists another edge with smallest index that is not in any edge orbits of edges in $G''$ but that is in the edge orbit of one of the edges in $G$. Adding that edge to $G''$ would result in a graph isomorphic to $G$ but with edges with lower index, thus contradicting the choice of $G$.
This proves no such graph can exist and thus that the strategy results in all non-isomorphic graphs with $k+1$ edges with minimal index.

By induction we proved that the generator generates all non-isomorphic graphs with $\ceilfrac{\binom n2 - \abs{E(S)}}{1+k}$ edges.
Next, we prove that the stage-two solver computes the score of the game assuming both players play optimally. We show that the path chosen is optimal by induction on the number of edges to add. Assume that all non-isomorphic final configurations are generated by the generator. Then, each end-configuration is assigned the score corresponding to the optimal outcome that can be reached from it.

Next, we assume that we are given all non-isomorphic graphs after $k+1$ moves with the score of the final configuration reached by an optimal game starting from that graph. Assume that the next player is player $2$, then the current edge was placed by player $1$. The program uncolours every possible red edge from every graph. If two graphs generated this way are isomorphic, the highest score is stored as the score of the optimal game. This is equivalent to selecting the best possible next graph in each possible graph after $k$ moves. Analogously if the current player is player $2$, the program uncolours every blue edge and stores the lowest possible score from a given graph after $k$ moves. This is equivalent to selecting the edge that leads to the worst game for player $1$.

The process converges to the starting graph, which consequently attains the score of the optimal game.


\section*{Acknowledgement}

The authors thank David Conlon for referring to~\cite{ES72}, Davide Mattiolo for discussions on $\Clique_{(1,q)}(n)$, Ben Seamone for an idea that lead to Proposition~\ref{prop:(1,2)}, and
Jan Goedgebeur, Jorik Jooken and especially Jarne Renders for discussions and advice on the efficient implementation of the game.


\begin{thebibliography}{1}

\bibitem{AMM83}
Unsolved {P}roblems: {A} {M}iscellany of {E}rdos {P}roblems.
\newblock {\em Amer. Math. Monthly}, 90(2):118+119--120, 1983.

\bibitem{Beck08}
J.~Beck.
\newblock {\em Combinatorial games. {Tic}-tac-toe theory}, volume 114 of {\em Encycl. Math. Appl.}
\newblock Cambridge: Cambridge University Press, 2008.

\bibitem{BL00}
M.~Bednarska and T.~{\L}uczak.
\newblock Biased positional games for which random strategies are nearly optimal.
\newblock {\em Combinatorica}, 20(4):477--488, 2000.



\bibitem{ES72}
P.~Erd\H{o}s and A.~Szemer\'edi.
\newblock On a {R}amsey type theorem.
\newblock {\em Period. Math. Hungar.}, 2:295--299, 1972.

\bibitem{Gebauer12}
H.~Gebauer.
\newblock On the clique-game.
\newblock {\em European J. Combin.}, 33(1):8--19, 2012.

\bibitem{Kierstead05}
H.~A. Kierstead.
\newblock Asymmetric graph coloring games.
\newblock {\em J. Graph Theory}, 48(3):169--185, 2005.



\bibitem{BM22}
J.~Bensmail and F.~Mc~Inerney.
\newblock On a vertex-capturing game.
\newblock {\em Theor. Comput. Sci.}, 923:27--46, 2022.

\bibitem{github-repository}
S.~Cambie and M.~Provoost.
\newblock {Edge Colouring Games} (version 1), 2025.
\newblock Available at \url{https://github.com/Algorithmic-Graph-Theory-Group/edge-colouring-games}.

\bibitem{BCC30}
P.~J. {Cameron}.
\newblock {Problems from BCC30}.
\newblock {\em arXiv e-prints}, page arXiv:2409.07216, Sept. 2024.

\bibitem{HOG}
K.~Coolsaet, S.~D'hondt, and J.~Goedgebeur.
\newblock House of {G}raphs 2.0: A database of interesting graphs and more.
\newblock {\em Discrete Appl. Math.}, 325:97--107, 2023. Available at \url{https://houseofgraphs.org/}.

\bibitem{MS24}
A.~{Malekshahian} and S.~{Spiro}.
\newblock {On a clique-building game of Erd{\H{o}}s}.
\newblock {\em arXiv e-prints}, page arXiv:2410.18304, Oct. 2024.

\bibitem{Nauty}
B.~D. McKay and A.~Piperno.
\newblock Nauty and traces user’s guide (version 2.8. 6), 2022.

\bibitem{Zermelo}
E.~Zermelo.
\newblock {\"U}ber eine {Anwendung} der {Mengenlehre} auf die {Theorie} des {Schachspiels}.
\newblock Proc. 5th {Int}. {Math}. {Congr}. 2, 501-504 (1913)., 1913.

\end{thebibliography}
\end{document}